\documentclass[11pt]{article}
\usepackage{amsmath,amssymb,a4wide,amsthm,color,graphicx,enumerate,booktabs}

\usepackage[]{algorithm2e}

\newtheorem{theorem}{Theorem}
\newtheorem{corollary}[theorem]{Corollary}
\newtheorem{remark}[theorem]{Remark}
\newtheorem{algorithm+}[theorem]{Algorithm}
\newtheorem{assumptions}[theorem]{Assumptions}

\newtheorem{proposition}[theorem]{Proposition}
\newtheorem{ex}[theorem]{Example}
\newenvironment{example}{\begin{ex} \em }{\em \end{ex}}
\newtheorem{definition}[theorem]{Definition}

\newcommand{\cb}    [1]{\ensuremath{\left  \{      #1  \right \}       }}

\newcommand{\cbgg}  [1]{\ensuremath{\biggl \{      #1  \biggr \}       }}

\newcommand{\of}    [1]{\ensuremath{\left (        #1  \right )        }}

\newcommand{\st} {\ensuremath{|\;}}
\newcommand{\smz}{\!\setminus\!\{0\}}

\DeclareMathOperator*{\epi} {epi}

\DeclareMathOperator*{\cl} {cl}
\DeclareMathOperator*{\bd} {bd}
\DeclareMathOperator*{\Int} {int}
\DeclareMathOperator*{\conv} {conv}

\DeclareMathOperator*{\gr} {gr}
\DeclareMathOperator*{\vertex}{vert}
\DeclareMathOperator*{\dom} {dom}

\DeclareMathOperator*{\argmin}{argmin}

\newcommand{\PP}{\mathcal{P}}

\newcommand{\R}{\mathbb{R}}
\newcommand{\N}{\mathbb{N}}

\usepackage{dsfont}
\usepackage[ngerman,english]{babel}
\selectlanguage{\english}







\newcommand{\real}{\mathbb{R}}


\newcommand{\ao}{\overline{a}}
\newcommand{\au}{\underline{a}}
\newcommand{\Bo}{\overline{B}}
\newcommand{\Bu}{\underline{B}}

\newcommand{\Mo}{\overline{M}}
\newcommand{\Mu}{\underline{M}}

\newcommand{\wo}{\overline{w}}
\newcommand{\wu}{\underline{w}}

\newcommand{\set}[1]{\left\{ #1\right\}}

\setcounter{MaxMatrixCols}{20}

\author{Andreas L\"{o}hne \thanks{Friedrich Schiller University Jena, Department of Mathematics, 07737 Jena, Germany, andreas.loehne@uni-jena.de} \and Andrea Wagner \thanks{Vienna University of Economics and Business, Institute for Statistics and Mathematics, andrea.wagner@wu.ac.at}}

\title{Solving DC programs with a polyhedral component utilizing a multiple objective linear programming solver}


\begin{document}
\maketitle

\begin{abstract} 
A class of non-convex optimization problems with DC objective function is studied, where DC stands for being representable as the difference $f=g-h$ of two convex functions $g$ and $h$. In particular, we deal with the special case where one of the two convex functions $g$ or $h$ is polyhedral. In case   $g$ is   polyhedral, we show that a solution of the DC program can be obtained from a solution of an associated polyhedral projection problem. In case   $h$ is  polyhedral, we prove that a solution of the DC program can be obtained by solving a polyhedral projection problem and finitely many convex programs. Since polyhedral projection is equivalent to multiple objective linear programming (MOLP), a MOLP solver (in the second case together with a convex programming solver) can be used to solve instances of DC programs with polyhedral component. Numerical examples are provided, among them an application to locational analysis. 
\medskip

\noindent
{\bf Keywords:} DC programming, global optimization, polyhedral projection, multiple objective linear programming, linear vector optimization
\medskip

\noindent
{\bf MSC 2010 Classification:} 15A39, 52B55, 90C29, 90C05 

\end{abstract}

\section{Introduction}

A function  $f: \R^n \to \R\cup\cb{\infty}$ is said to be {\em polyhedral convex} if its epigraph $\epi f:=\cb{(x,r)\in \R^n\times\R \st r \geq f(x)}$ is a polyhedral convex set. We consider the following DC optimization problem
\begin{equation}\label{p}\tag{P}
	\min\, [g(x) - h(x) ] \text{ subject to } x \in \dom g,
\end{equation}
where we assume that $g: \R^n \to \R\cup\cb{\infty}$ and $h: \R^n \to \R\cup\cb{\infty}$ are convex and at least one of the functions $g$ and $h$ is polyhedral convex. DC optimization problems are investigated for instance in \cite{Ale49,Dur02,FB97,Har59,HU86,LV96,MLS92,MLS97}.

The methods presented in this article are based on computing the vertices of  $\epi g$ in case   $g$ is polyhedral and the vertices of $\epi h^*$ in case  $h$ is polyhedral, where $h^*$ is the conjugate of $h$. It is shown that a solver for multiple objective linear programs (MOLP) can be used for this task. The aim of this paper is to show that global solutions of the considered class of DC-programs can be obtained by utilizing a MOLP-solver. The main advantage of this procedure is that the algorithms are very easy to implement. Nevertheless, the algorithms can be applied to certain non-convex problems in location theory, discussed below. We will show that they are competitive to some algorithms from the recent literature. It should be noted that the presented methods can be combined with other global optimization techniques in order to avoid to compute all vertices of the epigraphs. But then, a modification of the MOLP-solver is required, which would make the implementation more difficult.

The computation of vertices of an epigraph is related to the problem to project a polyhedron into a subspace. A {\em polyhedral convex set} $P$ in $\R^n$ (or {\em polyhedron} for short) is defined as the solution set of a system of finitely many linear inequalities, that is,
\begin{equation} \label{eq_h}
     P = \cb{x \in \R^n \st  B x \geq c}
\end{equation}
for a matrix $B \in \R^{m \times n}$ and a vector $c \in \R^m$. The pair $(B,\; c)$ is said to be an {\em H-representation} of $P$.
The well-known Weyl-Minkowski theorem states that every polyhedron can be expressed as the {\em generalized convex hull} of finitely many points $v^i \in \R^n$; $i=1,\dots, r$; $r \geq 1$ and finitely many directions $d^j \in \R^q\smz$; $j=1,\dots,s$; $s \geq 0$, that is
\begin{equation} \label{eq_v}
	 P=\cb{x\in \R^n\st x=V\lambda + D\mu,\; e^T \lambda= 1,\; \lambda \geq 0,\; \mu \geq 0},
\end{equation}
where $v^i$ and $d^j$ are the columns of $V\in \R^{n\times r}$ and $D \in \R^{n\times s}$, respectively. Further we denote by $e:=(1,\dots,1)^T$ the all-one-vector.
We say that $(V,D)$ is a {\em V-representation} of the polyhedron $P$.
Both H-representation and V-representation are special cases of the {\em projection representation} or {\em P-representation} $(B,C,c)$, that is
\begin{equation}\label{eq_p}
	P = \cb{x\in \R^n \st \exists u \in \R^k: B x + C u \geq c},
\end{equation}
for matrices $B \in \R^{m\times n}$, $C\in \R^{m\times k}$, $c\in \R^{m\times 1}$. In Sections \ref{sec_rep} and \ref{sec_app} we will see that in many situations only a P-representation of a polyhedron is known. An H-representation can be obtained from a P-representation by Fourier-Motzkin elimination and a V-representation can be obtained from an H-representation by vertex enumeration, for instance, by using the double description method by Motzkin et al. \cite{Motzkin53}. However, the first part of this procedure (Fourier-Motzkin elimination) is not tractable if $k$ is large. It has been shown in \cite{LoeWei15} that both an H-representation and a V-representation can be obtained from a P-representation by solving a multiple objective linear program (MOLP). This follows from the equivalence between a polyhedral projection problem (which is roughly speaking the problem to obtain a V-representation from a P-representation \eqref{eq_p}) and multiple objective linear programming, as shown in \cite{LoeWei15}. By {\em equivalence} the authors of \cite{LoeWei15} understand that a solution of the one problem can be ``easily'' (for details see \cite{LoeWei15} or Section \ref{sec_pp} below) obtained from a solution of the other problem. In order to compute a V-representation of \eqref{eq_p}, a multiple objective linear program with $n+1$ objective functions needs to be solved.

Multiple objective linear programs can be solved by Benson's algorithms \cite{Benson98}. Numerical examples for up to 10 objective functions are provided in \cite{Csirmaz13} and \cite{LoeWei16}. A more direct way to solve the polyhedral projection problem is the {\em convex hull method} by Lassez and Lassez \cite{LasLas90}. This method, however, is closely related to the dual variant of Benson's algorithm for multiple objective linear programs \cite{EhrLoeSha12}, which has been developed independently.

We close this section with some notation. We denote by $v_i$ the components of a vector $v$ and by $M^i$ the rows of a matrix $M$. As mentioned above, we set $e=(1,\dots,1)^T$. Given a convex function $g: \R^n \to \R\cup\cb{\infty}$ we denote by $\dom g:= \cb{x\in \R^n\st g(x) \neq \infty}$ the domain of $g$, and by $g^*:\R^n\to\R\cup\cb{\infty}$, $g^*(x^*):=\sup_{x\in \dom g} [{x^*}^Tx - g(x)]$ the conjugate of $g$.  {If $\dom g\neq \emptyset$ then the function $g$ is called proper.}
The recession cone of a polyhedron \eqref{eq_h} is the polyhedral convex cone  $0^+P := \cb{x \in \R^n \st  B x \geq 0}$. The closure, interior, boundary of a set $A$ is denoted, respectively, by $\cl A$, $\Int A$, $\bd A$.

\section{Representing polyhedral convex functions}\label{sec_rep}

Polyhedral convex functions can be represented by their epigraphs. The following definition is based on a P-representation  of the epigraph. 

\begin{definition} A matrix $A \in \R^{m \times (n+1+k+1)}$ is called a {\em representation} of a polyhedral convex function $f: \R^n \to \R\cup\cb{\infty}$ if
\begin{equation}\label{eq1}
	\epi f = \cb{ \begin{pmatrix}x\\r\end{pmatrix} \in \R^{n+1} \bigg|\; \exists u \in \R^k: A \begin{pmatrix}x\\r\\u\\-1\end{pmatrix}  \geq 0}.	
\end{equation}
\end{definition}

If the matrix $A$ is partitioned as $A=(B,b,C,c)$, where $B \in \R^{m\times n}$, $b \in \R^m$, $C \in \R^{m \times k}$ and $c \in \R^m$, we obtain
	\begin{equation}\label{eq5}
	  \epi f = \cb{ \begin{pmatrix}x\\r\end{pmatrix} \in \R^{n+1} \bigg|\; \exists u \in \R^k: B x + b r + C u \geq c}.
	\end{equation}

The following examples motivate the definition of a representation. We show that for many (classes of) polyhedral convex functions a representation can be obtained without any essential computational effort.

\begin{example}\label{ex0a}
	Consider finitely many affine functions $f_i(x)={D^i} x + d_i$ $(i=1\dots,m)$, a matrix 
	$P \in \R^{k\times n}$ and a vector $p \in \R^k$. Then 
	$$ f(x):= \left\{
	              \begin{array}{cl}
	                  \displaystyle\max_{i\in \cb{1,\dots,m} }f_i(x) & \text{ if } P x \geq p\\
		         + \infty & \text{ otherwise}
		    \end{array}\right.    $$
	is a polyhedral convex function with representation
	$$ A = \begin{pmatrix} -D & e & 0 & d \\ \phantom{-}P & 0 & 0 & p\end{pmatrix},$$
	where $D^1,\ldots,D^m$ are the rows of $D\in\R^{m\times n}$  and $d:=(d_1,\ldots,d_m)\in\R^m$. 
\end{example}

\begin{example}\label{ex0b}
	Let $f$ be a polyhedral convex function and let $(V,D)$ be a V-representation of $\epi f$, that is, $V \in \R^{(n+1)\times r}$ {with} $r \geq 1$ {and} $D \in \R^{(n+1)\times s}$ {where} $s \geq 0$,
	$$\epi f := \cb{z \in \R^{n+1} \st \exists \lambda \in \R^r, \mu \in \R^s:\; z = V \lambda + D \mu,\; e^T \lambda = 1,\; \lambda \geq 0,\; \mu \geq 0}.$$
	 Then 
	 $$ A = \begin{pmatrix}(B,b) & C & c \end{pmatrix} = \begin{pmatrix}
	 \mp I & (\pm V, \pm D) & 0 \\
		 0 & I & 0\\
	 0 & (\pm e^T, 0) & \pm 1
	 \end{pmatrix} $$
	 is a representation of $f$, where $\pm$ and $\mp$ are used to express the corresponding equations by inequalities.
\end{example}

\begin{example}\label{ex0c}
	Let $g,h: \R^n \to \R \cup \cb{\infty}$ be polyhedral convex functions with representations
	$$ A_g = \begin{pmatrix} B_g & b_g&C_g&c_g\end{pmatrix} \quad \text{and} \quad A_h = \begin{pmatrix} B_h & b_h&C_h&c_h\end{pmatrix} .$$
The {\em infimal convolution} of $g$ and $h$ is the polyhedral convex function $f$ with 
$$ \epi f = \epi g + \epi h.$$	
	 Thus, its representation is $A_f = \begin{pmatrix} B_f & b_f&C_f&c_f\end{pmatrix}$ with
	 $$ (B_f\;b_f) = \begin{pmatrix} \pm I \\ 0 \\ 0 \end{pmatrix} \quad C_f= \begin{pmatrix}  \mp I & 0 & \mp I & 0 \\
	 					 (B_g\; b_g) & C_g & 0 & 0  \\
						0 &  0 & (B_h\; b_h) & C_h  \end{pmatrix}  \quad c_f = \begin{pmatrix}
							0 \\ c_g \\ c_h
						\end{pmatrix}.
						$$
\end{example}

\begin{example}\label{ex0d}
	Let a representation $A = \begin{pmatrix} B&b&C&c\end{pmatrix}$ of a polyhedral convex function $f :\R^n \to \R\cup\cb{\infty}$ be given, then
	$$  A^* = \begin{pmatrix}
		\pm I & 0 & \pm B^T & 0 \\
		0 & 0 & \pm C^T & 0 \\
		0 & 0 & \pm b^T & \pm 1 \\
		0 &  1&  c^T & 0 \\
		0 & 0 & I & 0 
	\end{pmatrix}$$
	is a representation of the conjugate $f^* :\R^n \to \R\cup\cb{\infty}$ of $f$. The details are shown in the following proposition.
\end{example}

	\begin{proposition}\label{rep_conj} For a polyhedral convex function $f :\R^n \to \R\cup\cb{\infty}$, the following two statements are equivalent:
		\begin{equation}\label{eq7}
			\epi f = \cb{ \begin{pmatrix}x\\r\end{pmatrix} \in \R^{n+1} \bigg|\; \exists u \in \R^k: B x + b r + C u \geq c};
			\end{equation}
		\begin{equation}\label{eq8}
			\epi f^* = \cb{ \begin{pmatrix}x^*\\r^*\end{pmatrix} \in \R^{n+1} \bigg|\; \exists v \in \R^m_+: B^T v + x^* = 0,\; b^T v = 1, \;  C^T v = 0, \; c^T v + r^* \geq 0}.
		\end{equation}
	\end{proposition}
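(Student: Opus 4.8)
The plan is to establish the implication \eqref{eq7} $\Rightarrow$ \eqref{eq8} by linear programming duality and then to recover the converse from it by means of the biconjugation identity $f^{**}=f$. For the main direction, assume \eqref{eq7} and that $f$ is proper, so that $\epi f\neq\emptyset$ (the improper case is disposed of separately and is immediate). The first step is to express the conjugate as the optimal value of a linear program read off the given P-representation: for every $x^{*}\in\R^{n}$ one has $f^{*}(x^{*})=\sup_{x}[{x^{*}}^{T}x-f(x)]=\sup\cb{{x^{*}}^{T}x-r\st(x,r)\in\epi f}$, the last equality holding because enlarging $r$ past $f(x)$ only lowers the objective. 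Substituting \eqref{eq7} turns this into $f^{*}(x^{*})=\sup\cb{{x^{*}}^{T}x-r\st\exists u\in\R^{k}:\ Bx+br+Cu\geq c}$, a linear program in the variables $(x,r,u)$.

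The second step is to dualize this program. Writing it as $\max\cb{w^{T}z\st Mz\geq c}$ with $z=(x,r,u)$, $w=(x^{*},-1,0)$ and $M=(B,b,C)$, its linear programming dual is $\min\cb{-c^{T}v\st M^{T}v=-w,\ v\geq0}$, i.e.\ $\min\cb{-c^{T}v\st B^{T}v+x^{*}=0,\ b^{T}v=1,\ C^{T}v=0,\ v\geq0}$; keeping these signs straight is the one genuinely error-prone point. Since linear programs have no duality gap and the primal is feasible, strong duality gives $f^{*}(x^{*})=\min\cb{-c^{T}v\st B^{T}v+x^{*}=0,\ b^{T}v=1,\ C^{T}v=0,\ v\geq0}$, with the convention $\min\emptyset=+\infty$ corresponding exactly to the primal being unbounded, i.e.\ to $f^{*}(x^{*})=+\infty$. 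Hence $(x^{*},r^{*})\in\epi f^{*}$ iff $r^{*}\geq f^{*}(x^{*})$ iff some feasible $v$ satisfies $-c^{T}v\leq r^{*}$, that is $c^{T}v+r^{*}\geq0$; this is precisely the description in \eqref{eq8}.

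For the converse \eqref{eq8} $\Rightarrow$ \eqref{eq7}, use that a polyhedral convex function is closed, whence $f^{**}=f$. Statement \eqref{eq8} exhibits a concrete P-representation of $\epi f^{*}$, namely the matrix $A^{*}$ of Example~\ref{ex0d}; applying the implication just proved to the function $f^{*}$ equipped with this representation produces a P-representation of $\epi(f^{*})^{*}=\epi f$. What remains is to check that the representation so obtained describes the same polyhedron as the right-hand side of \eqref{eq7}. One inclusion can be done by a direct estimate: given $(x,r,u)$ with $Bx+br+Cu\geq c$ and any $(x^{*},r^{*})\in\epi f^{*}$, pick $v\geq0$ as in \eqref{eq8}; then ${x^{*}}^{T}x-r^{*}\leq{x^{*}}^{T}x+c^{T}v=-(Bx)^{T}v+c^{T}v\leq-(Bx)^{T}v+(Bx+br+Cu)^{T}v=r$, using $c^{T}v+r^{*}\geq0$, $x^{*}=-B^{T}v$, $v\geq0$, $b^{T}v=1$ and $C^{T}v=0$; hence $f(x)=f^{**}(x)=\sup_{(x^{*},r^{*})\in\epi f^{*}}[{x^{*}}^{T}x-r^{*}]\leq r$, so $(x,r)\in\epi f$.

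I expect the other inclusion of that step — showing that every element of $\epi f$ is produced by the P-representation, equivalently eliminating the auxiliary variables of the iterated construction $(A^{*})^{*}$ down to a single $u$ — to be the main technical obstacle. It is routine linear algebra, but the block bookkeeping is delicate, and it is here that the standing hypothesis that $(B,b,C,c)$ represents $f$ is really used (through upward-closedness of $\cb{(x,r,u)\st Bx+br+Cu\geq c}$ in the $r$-coordinate). The only other care required is the sign bookkeeping in the linear programming dual and the handling of the degenerate improper case.
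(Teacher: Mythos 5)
Your treatment of the implication \eqref{eq7}$\Rightarrow$\eqref{eq8} is correct and is in substance the paper's own argument: where you invoke LP strong duality for the program $\sup\cb{{x^*}^Tx-r \st \exists u:\, Bx+br+Cu\ge c}$, the paper applies Farkas' lemma to the implication ``$Bx+br+Cu\ge c \;\Rightarrow\; r^*\ge {x^*}^Tx-r$'', which is the same tool with the same sign bookkeeping. Your direct estimate showing that every $(x,r)$ admitting a $u$ with $Bx+br+Cu\ge c$ lies in $\epi f$ (via $f=f^{**}$ and a certificate $v$ from \eqref{eq8}) is also fine. One caveat: the improper case is not ``immediate''; for $f\equiv\infty$, $B=0$, $b=0$, $C=0$, $c=e$, statement \eqref{eq7} holds while the right-hand side of \eqref{eq8} is empty although $\epi f^*=\R^{n+1}$, so properness of $f$ must be assumed throughout (the paper does so tacitly).

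The genuine gap is the inclusion you defer as routine, namely $\epi f\subseteq S:=\cb{(x,r)\st \exists u:\, Bx+br+Cu\ge c}$ in the direction \eqref{eq8}$\Rightarrow$\eqref{eq7}. First, you cannot use ``the standing hypothesis that $(B,b,C,c)$ represents $f$'' there: that hypothesis is \eqref{eq7}, i.e.\ exactly the statement to be proved, so appealing to upward closedness of $S$ in the $r$-coordinate is circular. Second, the step cannot be completed from \eqref{eq8} alone: applying your forward argument to $f^*$ with the representation exhibited in \eqref{eq8} and eliminating the auxiliary variables yields only $\epi f=S+(\cb{0}\times\R_+)$; the equality $\epi f=S$ requires in addition that $S$ be upward closed in its last coordinate, and \eqref{eq8} does not encode this. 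Concretely, for $n=1$, $f\equiv0$, $m=2$, $B=(0,0)^T$, $b=(1,-1)^T$, $C=(0,0)^T$, $c=(0,0)^T$, the right-hand side of \eqref{eq8} equals $\cb{0}\times\R_+=\epi f^*$, so \eqref{eq8} holds, yet the right-hand side of \eqref{eq7} is $\R\times\cb{0}\neq\epi f$. Hence no amount of block bookkeeping will finish the plan as stated: you must either add the $r$-monotonicity of $S$ as an explicit hypothesis (automatic when the data really comes from a representation, which is the only direction the paper actually uses, cf.\ Example \ref{ex0d}), or settle for the weaker identity $\epi f=S+(\cb{0}\times\R_+)$. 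To be fair, the paper's own converse is dispatched with the single sentence ``follows likewise using $f=f^{**}$'' and rests on the same tacit monotonicity, so your outline mirrors the paper; but as a complete proof, the deferred inclusion is precisely where it breaks.
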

	\begin{proof} Assume that \eqref{eq7} is satisfied. Let $(x^*,r^*) \in \epi f^*$, i.e., 
		$$ r^* \geq f^*(x^*) = \sup_{x \in \R^n} \of{{x^*}^T x - f(x)} = \sup_{(x,r)\in \epi f}  \of{{x^*}^T x - r}.$$
	This is equivalent to the implication
	$$ (x,r)\in \epi f \implies r^* \geq {x^*}^T x - r.$$
	By \eqref{eq7}, this can be expressed as 
	$$ B x + b r + C u \geq c \implies r^* \geq {x^*}^T x - r.$$
	By Farkas' lemma (see e.g. \cite[Theorem 7.20]{Gueler10}) we obtain the existence of $v \in \R^m_+$ such that
	$$ B^T v + x^* = 0,\; b^T v = 1, \;  C^T v = 0, \; c^T v + r^* \geq 0, $$
	i.e., \eqref{eq8} holds. The opposite implication follows likewise by taking into account that $f=f^{**}$ for a polyhedral convex function $f$.	
	\end{proof}
	
The list of examples could be extended. Note that, in general, it is much more expensive to compute an H-representation or a V-representation of the epigraph of a polyhedral convex function. This problem, however, is important for our studies and will be discussed in Section \ref{sec_pp}.  

\section{Maximizing a convex function over a polyhedron}

A well-known result on maximizing a convex function over a polytope $P$ is the attainment of the maximum {in at least one vertex} of $P$. We discuss in this short section the case of unbounded polyhedra.

Let $f:\R^n \to \R \cup \cb{\infty}$ be convex and let $P$ be a polyhedron. Assume that $P$ has a vertex.
We consider the problem
\begin{equation}\label{c}
\tag{C} \max f(x) \quad\text{ subject to } \quad x \in P.
\end{equation}
\begin{proposition}\label{prop1}
If Problem \eqref{c} has an optimal solution, then some vertex $x^*$ of $P$ is an optimal solution of \eqref{c}.
\end{proposition}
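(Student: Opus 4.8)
The plan is to use the Weyl--Minkowski theorem to write $P$ in the V-representation $P=\{V\lambda+D\mu \mid e^T\lambda=1,\ \lambda\ge0,\ \mu\ge0\}$ with columns $v^1,\dots,v^r$ of $V$ (the vertices, since $P$ has a vertex) and columns $d^1,\dots,d^s$ of $D$ (generators of the recession cone $0^+P$). Let $\bar x$ be an optimal solution of \eqref{c}, so $f(\bar x)\ge f(x)$ for all $x\in P$, and write $\bar x=V\bar\lambda+D\bar\mu$.

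First I would reduce to the case of a polytope by showing that along every recession direction the function cannot strictly decrease: more precisely, I claim that $f(v^i)=f(\bar x)$ for every index $i$ with $\bar\lambda_i>0$. Suppose not; since $\bar x=\sum_i\bar\lambda_i v^i + D\bar\mu$ and $f$ is convex, Jensen's inequality gives $f(\bar x)\le \sum_i \bar\lambda_i f(v^i + D\bar\mu /(\text{something}))$ --- but to keep this clean I would instead argue directly: consider the point $y^t := \bar x + t d^j$ for a fixed generator $d^j$ and $t\ge0$. Then $y^t\in P$ for all $t\ge 0$, so $f(y^t)\le f(\bar x)$ by optimality. On the other hand the function $t\mapsto f(\bar x + t d^j)$ is convex on $[0,\infty)$ and bounded above, hence nonincreasing; combined with $f(y^0)=f(\bar x)$ being the maximum this forces $f$ to be constant on the ray, so in particular we may "push" the point $\bar x$ arbitrarily far in any recession direction without changing the value. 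This lets me assume, after replacing $\bar x$ by a suitable point, that $\bar\mu$ contributes nothing essential; equivalently I work with the polytope $Q:=\conv\{v^1,\dots,v^r\}$ and note $\max_{Q} f \le \max_P f = f(\bar x)$, while $f(\bar x) = f(V\bar\lambda + D\bar\mu)$.

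The cleanest route, which I would actually write up, avoids the ray argument and goes as follows. Since $f$ is convex and $f$ attains a finite maximum $\alpha:=f(\bar x)$ on $P$, the superlevel set does not help, but the sublevel set $S:=\{x\in\R^n \mid f(x)\le\alpha\}$ is convex and contains all of $P$. Now $\bar x\in P$, and $\bar x=V\bar\lambda+D\bar\mu$ with $e^T\bar\lambda=1$. Consider $z:=\sum_{i:\bar\lambda_i>0}\bar\lambda_i v^i \in Q\subseteq P$ together with the recession part; by convexity of $f$ along the segment from $z$ in direction $D\bar\mu$ and the fact that $f$ is nonincreasing along recession directions of $P$ (shown as above), we get $f(z)\ge f(\bar x)=\alpha$, hence $f(z)=\alpha$ and $z$ is also optimal. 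Since $z=\sum_i\bar\lambda_i v^i$ is a convex combination of the vertices and $f(z)=\alpha\ge f(v^i)$ for all $i$, convexity of $f$ forces $f(v^i)=\alpha$ for every $i$ with $\bar\lambda_i>0$. Picking any such index $i^*$ yields a vertex $x^*=v^{i^*}$ of $P$ with $f(x^*)=\alpha$, which is therefore an optimal solution of \eqref{c}.

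The main obstacle is the handling of the unbounded directions: I must be sure that optimality of \eqref{c} rules out $f$ strictly increasing along any ray $\bar x+td^j$, $t\ge0$, and that a convex function bounded above on a half-line and equal to its supremum at the endpoint is constant there --- this is where unboundedness of $P$ could a priori break the "maximum at a vertex" statement, and it is exactly the place where the hypothesis that an optimal solution \emph{exists} is used. Everything else (the Weyl--Minkowski decomposition, Jensen's inequality for the convex combination of vertices, and extracting the vertex $i^*$) is routine.
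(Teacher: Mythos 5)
Your ``cleanest route'' is correct and rests on the same ingredients as the paper's proof: the Minkowski decomposition of $P$ into $\conv(\vertex P)+0^+P$ and the observation that a convex function bounded above along a ray contained in $P$ cannot increase, so the polytope part of the optimal solution is again optimal and a vertex in its support attains the maximum. The only difference is cosmetic --- you argue directly from the given optimal solution and finish with Jensen's inequality, whereas the paper argues by contradiction after invoking the known vertex result for polytopes --- so this is essentially the paper's argument.
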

\begin{proof} We denote by $v^1,\dots,v^k$ the vertices of $P$ and set $B:= \conv\cb{v^1,\dots,v^k}$. Convexity of $f$ implies that there is a vertex $v^j$ of $B$ which is an optimal solution of 
	$$ \max f(x) \quad\text{ subject to }\quad x \in B.$$
Assume there is $x \in P \setminus B$ with $f(x) > f(v^j)$. The point $x$ can be represented as the sum $x=b+d$ of a point $b \in B$ and a direction $d \in (0^+ P) \smz$. Then, $f(b+d) > f(v^j) \geq f(b)$. Convexity of $f$ implies that $f(b+nd) \to \infty$ as $n \to \infty$, which contradicts the existence of an optimal solution.
\end{proof}

Then next statement characterizes the existence of a solution.

\begin{proposition}\label{prop_d1}
	Problem \eqref{c} is unbounded if and only if there exists a vertex $v$ of $P$, an extreme direction $d$ of $P$ and some $\beta > 0$ with
	$$ f(v)<f(v+ \beta d).$$
\end{proposition}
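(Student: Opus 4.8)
The plan is to establish both directions by reducing to the finitely many one-dimensional problems obtained by walking along an extreme direction from a vertex. For the ``if'' direction, suppose such $v$, $d$ and $\beta>0$ exist with $f(v)<f(v+\beta d)$. Since $d \in 0^+P$, the ray $\cb{v + t d \st t \geq 0}$ lies entirely in $P$, so it suffices to show $f(v+td)\to\infty$ as $t\to\infty$. This is exactly the convexity argument already used in the proof of Proposition \ref{prop1}: the function $t\mapsto f(v+td)$ is convex on $[0,\infty)$ and strictly increasing on $[0,\beta]$, hence its slopes are bounded below by a positive constant from $\beta$ onward, forcing $f(v+td)\to\infty$. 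Therefore \eqref{c} is unbounded.

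For the ``only if'' direction, assume \eqref{c} is unbounded, i.e.\ there is a sequence $x^k\in P$ with $f(x^k)\to\infty$. First I would dispose of a trivial normalization: $P$ has a vertex by assumption, so (as in Proposition \ref{prop1}) write $P = B + 0^+P$ with $B=\conv\cb{v^1,\dots,v^m}$ the convex hull of the vertices, and recall that $0^+P$ is a pointed polyhedral cone, generated by its finitely many extreme directions $d^1,\dots,d^s$. Decompose $x^k = b^k + \sum_j t_j^k d^j$ with $b^k\in B$, $t_j^k\geq 0$. Since $B$ is compact and $f$ is convex (hence finite and continuous on a neighborhood of $B$ — here I would note $B\subseteq\dom f$ is needed; if \eqref{c} has finite objective values this holds, and unboundedness of the sup presupposes the feasible region meets $\dom f$, so one may restrict attention to the face structure where $f$ is finite), $f$ is bounded on $B$. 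Hence the unboundedness must come from the conical part: some coordinate, say $t_{j_0}^k$, is unbounded along a subsequence.

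The key step is then a convexity/monotonicity argument pinning the blow-up to a single vertex–direction pair. Fix the extreme direction $d:=d^{j_0}$. I would argue that $\sup_{x\in P} f(x) = \sup_{t\geq 0,\, v\text{ vertex}} f(v+td)$ cannot be finite: pick the subsequence along which $t^k_{j_0}\to\infty$, pass to a further subsequence so that $b^k\to b^\infty\in B$ and the vertex $v$ of $B$ whose ``cone'' contains $b^k + \sum_{j\neq j_0} t_j^k d^j$ is fixed (there are finitely many vertices, so this is possible), and use convexity of $f$ to bound $f(v + t^k_{j_0} d)$ from below by an affine combination forcing $t\mapsto f(v+td)$ to be unbounded; a convex function of one variable that is unbounded above on $[0,\infty)$ is eventually strictly increasing, so there exists $\beta>0$ with $f(v)<f(v+\beta d)$.

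The main obstacle I anticipate is the bookkeeping in the ``only if'' direction: cleanly extracting, from an arbitrary unbounded sequence in $P$, a single vertex $v$ and a single extreme direction $d$ witnessing the strict inequality. The decomposition $x^k=b^k+\sum_j t^k_j d^j$ need not be unique, and the blow-up could a priori be spread across several directions; the fix is to use that $f$ is bounded on the compact $B$ and convex, so that $f(x^k)\to\infty$ forces $f$ to be unbounded along some ray $v+td$ with $v$ a vertex of $P$, after which the one-dimensional convexity lemma does the rest. I would also double-check the degenerate cases where $\dom f$ does not contain all of $P$, restricting to the sub-polyhedron $P\cap\dom f$ (which is again polyhedral with a vertex, inheriting extreme directions among those of $P$) so that $f$ is finite and continuous where needed.
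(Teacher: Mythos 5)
Your ``if'' direction is correct and coincides with the paper's: convexity of $t\mapsto f(v+td)$ on the ray $\cb{v+td \st t\geq 0}\subseteq P$, together with $f(v)<f(v+\beta d)$, forces $f(v+td)\to\infty$. The gap is in the ``only if'' direction: the step you yourself flag as the main obstacle --- extracting one vertex $v$ and one extreme direction $d$ with $f(v)<f(v+\beta d)$ --- is never actually carried out, and the route you sketch does not work as described. Fixing a coordinate $j_0$ with $t^k_{j_0}\to\infty$ and arguing that $f$ must then be unbounded along a ray $v+td^{j_0}$ fails in general: take $P=\R^2_+$ (unique vertex $0$, extreme directions $d^1=(1,0)^T$, $d^2=(0,1)^T$), $f(x)=x_1^2$ and $x^k=(k,k)^T$; both coefficients blow up, yet $f$ is constant along every ray in direction $d^2$. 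So an unbounded coefficient does not single out a direction along which $f$ blows up, and your sketch contains no mechanism that selects the correct pair $(v,d)$. Moreover, ``bound $f(v+t^k_{j_0}d)$ from below by an affine combination'' runs convexity the wrong way: convexity gives upper bounds for $f$ at convex combinations; to get information about values on a ray you must exhibit $x^k$ itself as a convex combination of ray points.

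That is exactly the device the paper uses and your proposal lacks. Writing $x_\nu=\sum_i\lambda^\nu_i v^i+\sum_j\mu^\nu_j d^j$, setting the \emph{common} scalar $\beta^\nu:=\sum_k\mu^\nu_k$ (nonzero for large $\nu$ by Proposition \ref{prop1}) and $\alpha^\nu_{ij}:=\lambda^\nu_i\mu^\nu_j/\beta^\nu$, one obtains $x_\nu=\sum_{i,j}\alpha^\nu_{ij}\,(v^i+\beta^\nu d^j)$ as a convex combination of the finitely many points $v^i+\beta^\nu d^j$. Jensen's inequality then yields $f(x_\nu)\leq\max_{i,j}f(v^i+\beta^\nu d^j)$, so along a subsequence one fixed pair $(i,j)$ satisfies $f(v^i+\beta^\nu d^j)\to\infty$, and any sufficiently large $\nu$ provides the required $\beta:=\beta^\nu$ with $f(v^i)<f(v^i+\beta d^j)$. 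Without this (or an equivalent) reduction, your proposed ``fix'' --- that $f(x^k)\to\infty$ ``forces $f$ to be unbounded along some ray $v+td$'' --- is a restatement of the claim to be proved rather than an argument. (Your side remarks about $\dom f$ and restricting to $P\cap\dom f$ are harmless but do not address this central step.)
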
 
\begin{proof} Let \eqref{c} be unbounded. There is a sequence $(x_\nu)$ in $P$ with $(f(x_{\nu})) \to \infty$ as $\nu \to \infty$. For all $\nu \in \N$, we have 
	$$ x_{\nu} = \sum_{i=1}^r \lambda^\nu_i v^i + \sum_{j=1}^s \mu^\nu_j d^j,\qquad \sum_{i=1}^r \lambda^\nu_i = 1, \qquad \lambda^\nu_i \geq 0,\quad \mu^\nu_j \geq 0,$$
	where $v^i$ are the vertices and $d^i$ the extreme directions of $P$. Setting 
	$$ \alpha^\nu_{ij} := \frac{\lambda^\nu_i \mu^\nu_j}{\beta^\nu} \quad \text{ and } \quad \beta^\nu := \sum_{k=1}^s \mu^\nu_k,$$	
	we obtain 
	$$ x_{\nu} = \sum_{i=1}^r\sum_{j=1}^s \alpha^\nu_{ij}(v^i + \beta^\nu d^j), \qquad \sum_{i=1}^r\sum_{j=1}^s \alpha^\nu_{ij} = 1,\qquad \alpha^\nu_{ij}\geq 0.$$
	By Proposition \ref{prop1}, we have $\beta^\nu \neq 0$ for $\nu$ being sufficiently large. Convexity implies that 
	$$ \sum_{i=1}^r\sum_{j=1}^s \alpha^\nu_{ij} f(v^i + \beta^\nu d^j) $$
	tends to infinity. Hence $f(v^i + \beta^\nu d^j) \to \infty$ for at least one $i$ and $j$, which proves the first part of the statement.
	
	The converse implication follows from convexity of $f$ along the ray $\cb{v+\beta d \st \beta \geq 0}$, which belongs to $P$.
\end{proof}

\section{The case of $g$ being polyhedral} \label{sec_g}

The algorithm considered in this section is based on an enumeration of the vertices of $\epi g$.  We start with a reformulation of the DC program \eqref{p}.

\begin{proposition}
Problem \eqref{p} can be expressed equivalently as
\begin{equation}\label{p_1}
	\min  [r - h(x)] \quad\text{ subject to }\quad (x,r) \in \bd \epi g.
\end{equation}
\end{proposition}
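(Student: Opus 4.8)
The plan is to show that the reformulation \eqref{p_1} is equivalent to \eqref{p} by arguing that at any optimal solution of \eqref{p}, we may assume the pair $(x,g(x))$ lies on the boundary of $\epi g$, and conversely that minimizing $r-h(x)$ over $\bd\epi g$ recovers the original infimum. The key observation is that for fixed $x\in\dom g$, decreasing $r$ decreases the objective $r-h(x)$, so the minimization over $\epi g$ (if we were to use $\epi g$ instead of $\bd\epi g$) pushes $r$ down to its smallest feasible value, namely $r=g(x)$, and the point $(x,g(x))$ belongs to $\bd\epi g$ whenever $x\in\dom g$. Thus $\min[g(x)-h(x)\mid x\in\dom g]=\min[r-h(x)\mid (x,r)\in\epi g]$, and this last infimum is attained on the boundary.

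First I would make precise the elementary fact that $x\in\dom g$ implies $(x,g(x))\in\bd\epi g$: since $g$ is convex and (implicitly, being polyhedral) proper, for any $x\in\dom g$ the point $(x,g(x))$ lies in $\epi g$ but $(x,g(x)-\eps)\notin\epi g$ for every $\eps>0$, so it is not an interior point. Conversely, every point $(x,r)\in\bd\epi g$ satisfies $x\in\dom g$ and $r\geq g(x)$. Next I would establish the chain of equalities between the optimal values: for the ``$\leq$'' direction, given any feasible $x$ for \eqref{p}, the point $(x,g(x))$ is feasible for \eqref{p_1} with the same objective value $g(x)-h(x)$; for the ``$\geq$'' direction, given any $(x,r)\in\bd\epi g$, we have $r\geq g(x)$, hence $r-h(x)\geq g(x)-h(x)$, and $x$ is feasible for \eqref{p}. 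Combining these, the two problems have the same optimal value, and moreover an optimal solution of one yields an optimal solution of the other via the correspondence $x\mapsto(x,g(x))$ and $(x,r)\mapsto x$.

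I expect no serious obstacle here; the statement is essentially a bookkeeping lemma that prepares the ground for the vertex-enumeration algorithm, whose real content is that once we have restricted attention to $\bd\epi g$ we can further restrict to the (finitely many) faces and ultimately relate minimizers to vertices of $\epi g$ via Proposition~\ref{prop1}. The only point requiring a modicum of care is the possibility that $h(x)=\infty$ for some $x\in\dom g$: in that case $g(x)-h(x)=-\infty$ in \eqref{p} and correspondingly $r-h(x)=-\infty$ in \eqref{p_1} at $(x,g(x))$, so the equivalence of the (possibly infinite) optimal values still holds with the usual conventions on $\R\cup\cb{\infty}$; I would note this briefly rather than dwell on it. If one wanted to be fully rigorous about $r-h(x)$ being well-defined, one would restrict the feasible set of \eqref{p_1} implicitly to $x\in\dom h$ as well, but since both problems share the constraint $x\in\dom g$ and the term $-h(x)$, this causes no discrepancy.
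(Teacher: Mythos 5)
Your argument is correct and follows essentially the same route as the paper: both rest on the facts that $\gr g \subseteq \bd\epi g \subseteq \epi g$ (using that $\epi g$ is closed, as it is polyhedral) and that $r \geq g(x)$ for feasible points of \eqref{p_1}, so that the correspondences $x \mapsto (x,g(x))$ and $(x,r)\mapsto x$ transfer optimal solutions between \eqref{p} and \eqref{p_1}. The only cosmetic difference is that you compare optimal values first and then deduce the solution correspondence, whereas the paper argues directly with optimal solutions; the content is the same.
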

\begin{proof} Since $g:\R^n \to \R\cup \cb{\infty}$ (i.e. the value $-\infty$ is not allowed), for the graph $\gr g:=\cb{(x,g(x)) \st x \in \dom g}$ of $g$, we have
	$$ \gr g \subseteq \bd\epi g \subseteq \epi g = \gr g + (\cb{0}\times \R_+). $$
	Let $\bar x$ be an optimal solution of \eqref{p}. For all $(x,r) \in \epi g$ (and in particular for all $(x,r) \in \bd\epi g$) we have $g(\bar x) - h(\bar x) \leq g(x)-h(x) \leq r - h(x)$.
	Since $(\bar x,g(\bar x)) \in\bd\epi g$, we conclude that $(\bar x,g(\bar x))$ solves \eqref{p_1}.
	
	Vice versa, let $(\bar x,\bar r) \in  \bd\epi g$ be a solution of \eqref{p_1}, then	
	$$ \forall (x,r) \in \gr g \subseteq \bd\epi g: \quad \bar r - h(\bar x) \leq r - h(x).$$
We get $\bar r=g(\bar x)$ and see that $\bar x \in \dom g$ solves  \eqref{p}.
\end{proof}

Assume now that $g:\R^n \to \R\cup \cb{\infty}$ is polyhedral convex. Let $\cb{F_i\st i=1,\dots,k}$ be the (finite) set of all facets of $\epi g$. Then, it is evident that
$$ \bd \epi g = \bigcup_{i=1,\dots,k} F_i.$$
The following statement is now obvious. 
\begin{corollary} For $i=1,\dots,k$, let $(x^i,r^i)$ be an optimal solution of
\begin{equation} \label{pi}\tag{P$_i$}
	\min  [r - h(x)] \quad\text{ subject to }\quad (x,r) \in F_i.
\end{equation}
Let $j \in \argmin\cb{r^i-h(x^i)\st i=1,\dots,k}$. Then, $(x^j,r^j)$ is an optimal solution of \eqref{p}.
Problem (P) has an optimal solution if and only if, for all $i \in \cb{1,\dots,k}$, \eqref{pi} has an optimal solution.
\end{corollary}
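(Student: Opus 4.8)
The plan is to derive the corollary directly from the preceding proposition, which already reformulates \eqref{p} as \eqref{p_1}, namely as the minimization of $r - h(x)$ over $\bd\epi g$. Since $g$ is polyhedral convex, $\epi g$ is a polyhedral convex set, and its boundary decomposes as the finite union of its facets $F_1,\dots,F_k$. The key observation is therefore the elementary fact that minimizing a function over a finite union of sets amounts to minimizing it over each set separately and then taking the best of the partial minima.

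Concretely, I would argue as follows. First, invoke the proposition to replace \eqref{p} by \eqref{p_1}. Next, record that $\bd\epi g = \bigcup_{i=1}^k F_i$, as noted just before the corollary. Then, for the forward direction of the equivalence about solvability: if \eqref{p} (equivalently \eqref{p_1}) has an optimal solution $(\bar x,\bar r)$, it lies in some facet $F_i$; but then, since $F_i \subseteq \bd\epi g$, the point $(\bar x,\bar r)$ is also optimal for the restricted problem \eqref{pi} over that $F_i$ (it is feasible and no point of $F_i$ can beat it), and for every other $i'$ the restricted problem \eqref{pi} is bounded below by the optimal value of \eqref{p_1} and attains this bound or something larger on the closed face $F_{i'}$ — here one must note that $F_{i'}$ is itself a (nonempty) polyhedron, so we should argue attainment, not merely boundedness. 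For the converse: if each \eqref{pi} has an optimal solution $(x^i,r^i)$ with value $v_i := r^i - h(x^i)$, then picking $j \in \argmin\{v_i : i=1,\dots,k\}$ and observing that any feasible $(x,r)$ for \eqref{p_1} lies in some $F_i$ and hence satisfies $r - h(x) \geq v_i \geq v_j = r^j - h(x^j)$ shows that $(x^j,r^j)$ solves \eqref{p_1}, hence \eqref{p}.

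The one genuine subtlety — and the main obstacle — is the claim that each subproblem \eqref{pi} is solvable whenever \eqref{p} is. Boundedness below is immediate (the infimum over $F_i$ is at least the infimum over $\bd\epi g$), but one must check that the infimum over the facet $F_i$ is actually \emph{attained}. This is where the earlier results on maximizing a convex function over a polyhedron come in: $r - h(x)$ is a convex function of $(x,r)$, each facet $F_i$ is a polyhedron, and minimizing a convex function over a polyhedron is a maximization of a convex function in disguise only after a sign change — so strictly one should instead argue that if the convex function $(x,r)\mapsto h(x) - r$ is bounded above on $F_i$ (which it is, by the reformulation and the assumed solvability of \eqref{p}), then by Proposition~\ref{prop_d1} it attains its supremum, since otherwise there would be a vertex $v$ of $F_i$ and an extreme direction $d$ of $F_i \subseteq \epi g$ along which it strictly increases, forcing unboundedness — a contradiction. (One also uses that $\epi g$, and hence each $F_i$, has a vertex, which holds once $\epi g$ contains no line; in the DC setting $g$ is proper convex with $\dom g \neq \emptyset$, and one may pass to a line-free reformulation, or simply invoke that $\epi g$ has a vertex whenever $g$ is polyhedral and finite somewhere — the relevant genericity assumptions are implicit in the paper's framing.) With attainment on each $F_i$ secured, the rest is the bookkeeping described above, and the corollary follows.

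In short, the proof is: (i) reduce to \eqref{p_1} via the proposition; (ii) split $\bd\epi g$ into facets; (iii) use Propositions~\ref{prop1} and~\ref{prop_d1} to transfer attainment from the global problem to each facet subproblem and back; (iv) assemble the optimal value as the minimum of the facet optimal values. I expect step (iii) to require the only real care; the remaining steps are routine.
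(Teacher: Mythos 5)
Your proof is correct and follows essentially the route the paper intends: the corollary is stated there without proof (``now obvious''), resting exactly on the reduction to \eqref{p_1}, the decomposition $\bd\epi g=\bigcup_i F_i$, and the attainment facts behind Propositions \ref{prop1} and \ref{prop_d1}, which is precisely what you spell out, including the one non-trivial point (attainment of each facet subproblem when \eqref{p} is solvable). Two minor remarks: Proposition \ref{prop_d1} as stated only characterizes unboundedness, so ``bounded above on $F_i$ $\Rightarrow$ supremum attained at a vertex'' strictly requires the short convex-combination argument from its proof rather than a direct citation, and your parenthetical fallback that $\epi g$ has a vertex whenever $g$ is polyhedral and finite somewhere is false (take $g\equiv 0$), so the lineality-space reduction (a convex function bounded above on an affine subspace is constant there) is the safe way to cover the vertex-free case---both are presentational points, not gaps.
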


Assume that \eqref{p} has an optimal solution. Then, for every  $i \in \cb{1,\dots,k}$, Problem \eqref{pi} has an optimal solution which is, by Proposition \ref{prop1}, a vertex of the facet $F_i$ of $\epi g$. Since a vertex of a facet $F$ of $\epi g$ is also a vertex of $\epi g$, an optimal solution of \eqref{p} can  always be found among the vertices of $\epi g$, denoted $\vertex \epi g$.

\begin{corollary} Assume that \eqref{p} has an optimal solution. Every optimal solution $(\bar x,\bar r)$ of 
\begin{equation}\label{pe}	
	\min [r - h(x)] \quad\text{ subject to }\quad (x,r) \in \vertex \epi g.
\end{equation}
is also an optimal solution of \eqref{p}.
\end{corollary}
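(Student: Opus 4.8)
The plan is to read off the claim from the preceding corollary on the facet subproblems \eqref{pi}, together with the observation made in the paragraph just above that every vertex of a facet of $\epi g$ is a vertex of $\epi g$, and the reformulation of \eqref{p} as \eqref{p_1}. Write $\gamma^*$ for the common optimal value of \eqref{p} and of $\min\,[r-h(x)]$ subject to $(x,r)\in\bd\epi g$.

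First I would check that \eqref{pe} is feasible and has optimal value at most $\gamma^*$. Since \eqref{p} has an optimal solution, the preceding corollary guarantees that each \eqref{pi} has an optimal solution; by Proposition \ref{prop1} (applied with the convex function $(x,r)\mapsto h(x)-r$ on the polyhedron $F_i$) this optimal solution may be taken to be a vertex $(x^i,r^i)$ of $F_i$, and one has $\gamma^*=\min_{i=1,\dots,k}[r^i-h(x^i)]$. Each such $(x^i,r^i)$ is a vertex of $\epi g$, so the feasible set of \eqref{pe} is nonempty and contains a point with objective value $\gamma^*$.

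For the reverse inequality I would use only the inclusion $\vertex\epi g\subseteq\bd\epi g$: minimizing $r-h(x)$ over the smaller set $\vertex\epi g$ cannot give a value below the optimal value $\gamma^*$ of \eqref{p_1}. Hence \eqref{pe} has optimal value exactly $\gamma^*$, so every optimal solution $(\bar x,\bar r)$ of \eqref{pe} satisfies $\bar r-h(\bar x)=\gamma^*$ and $(\bar x,\bar r)\in\bd\epi g$; thus $(\bar x,\bar r)$ solves \eqref{p_1} and, by the equivalence of \eqref{p_1} and \eqref{p}, solves \eqref{p}.

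Given what precedes, the statement is close to a formality, so there is no real obstacle; the only point deserving care is the direction of the two estimates. Passing from $\bd\epi g$ to its subset $\vertex\epi g$ yields the inequality ``$\ge\gamma^*$'' for free, but the inequality ``$\le\gamma^*$'' genuinely requires the preceding corollary together with Proposition \ref{prop1} to exhibit a vertex of $\epi g$ attaining $\gamma^*$. As in the surrounding discussion, this tacitly assumes that $\epi g$, and hence each facet $F_i$, has a vertex, which is what makes Proposition \ref{prop1} applicable and ensures $\vertex\epi g\ne\emptyset$.
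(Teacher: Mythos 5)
Your argument is correct and follows essentially the same route as the paper: the paper justifies this corollary by the paragraph preceding it, namely that each facet subproblem \eqref{pi} has an optimal solution attained, via Proposition \ref{prop1}, at a vertex of the facet, that vertices of facets are vertices of $\epi g$, and that passing to the subset $\vertex\epi g \subseteq \bd\epi g$ cannot decrease the optimal value of \eqref{p_1}. Your write-up merely makes explicit the two inequalities and the tacit assumption that $\epi g$ (hence each facet) has a vertex, which is exactly the content the paper leaves implicit.
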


If a representation $A$ of the polyhedral convex function $g$ is given, the vertices of $\epi g$ can be computed by solving a polyhedral projection problem, see Section \ref{sec_pp}. If $\vertex \epi g$ has been computed, it remains to determine the minimum over finitely many points in problem \eqref{pe}. We obtain the following solution procedure for the DC optimization problem \eqref{p}.

\bigskip\noindent
\begin{assumptions}\label{ass_primal}
	\mbox{}
\begin{enumerate}[(i)]
	\item (P) has an optimal solution,	\label{a1}
	\item $g$ is polyhedral convex,\label{a2}
	\item $\epi g$ has a vertex,\label{a3}
	\item a representation $A$ of $g$ is known.   \label{a4}
\end{enumerate}
\end{assumptions}

\bigskip\noindent
\begin{algorithm+}\label{primal_alg}
{Assume that Assumptions \ref{ass_primal}~\eqref{a1}--\eqref{a4} are satisfied.}
	
\begin{algorithm}[H]
	\KwData{representation $A$ of $g$,  function $h$.}
	\KwResult{$x^j$ is an optimal solution of \eqref{p}. }
	\begin{enumerate}[(1)]
		\item Compute the vertices $(x^1,r^1),\dots,(x^k,r^k)$ of $\epi g$ by solving a polyhedral projection problem, see Section \ref{sec_pp}.
		\item Choose $(x^j,r^j)$ such that $r^j - h(x^j) = \displaystyle\min_{i=1,\dots,k} [r^i - h(x^i)]$.
	\end{enumerate}
\end{algorithm}
\end{algorithm+}

\begin{corollary}
	Algorithm \ref{primal_alg} works correctly if  Assumptions {\ref{ass_primal}~}\eqref{a1}--\eqref{a4} are satisfied.
\end{corollary}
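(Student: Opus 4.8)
The plan is to show that under Assumptions~\ref{ass_primal}~\eqref{a1}--\eqref{a4} both steps of Algorithm~\ref{primal_alg} can be carried out and that the point $x^j$ it returns indeed solves \eqref{p}. The statement is essentially a bookkeeping corollary that stitches together the results proved above, so I would organize the argument along the two steps of the algorithm and simply check, for each, that it is executable and correct.

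For step~(1), I would first use Assumption~\eqref{a2}: since $g$ is polyhedral convex, $\epi g$ is a polyhedron and hence has only finitely many vertices, so the list $(x^1,r^1),\dots,(x^k,r^k)$ is well defined, and by Assumption~\eqref{a3} we have $k\ge 1$. Next I would argue that the computation is actually possible: by Assumption~\eqref{a4} a representation $A=(B,b,C,c)$ of $g$ is available, which by \eqref{eq5} exhibits $\epi g$ in P-representation form; as worked out in Section~\ref{sec_pp} via the equivalence between polyhedral projection and MOLP, a V-representation of $\epi g$, and in particular its vertex set $\vertex\epi g=\{(x^i,r^i):i=1,\dots,k\}$, can be obtained from such a representation. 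Thus step~(1) terminates and returns all vertices of $\epi g$.

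For step~(2), the minimization in \eqref{pe} is taken over the non-empty finite set $\vertex\epi g$, so a minimizing index $j$ exists; under Assumption~\eqref{a1} one also checks that $h$ is finite at every vertex $x^i$ (otherwise, since $(x^i,r^i)\in\epi g$ gives $g(x^i)\le r^i<\infty$, we would have $g(x^i)-h(x^i)=-\infty$ and \eqref{p} would be unbounded below), so the objective values $r^i-h(x^i)$ are real numbers and the selection is unambiguous. Finally I would invoke the corollary preceding the Assumptions: since \eqref{p} has an optimal solution, every optimal solution of \eqref{pe} is also optimal for \eqref{p}; as $(x^j,r^j)$ minimizes $r-h(x)$ over the feasible set $\vertex\epi g$ of \eqref{pe}, it is such an optimal solution, and therefore $x^j$ solves \eqref{p}, which is exactly what the algorithm claims to output. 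The only ingredient that is not pure bookkeeping is the assertion that step~(1) returns \emph{all} vertices of $\epi g$ in finitely many operations; this is precisely the content of the polyhedral-projection reduction of Section~\ref{sec_pp}, so no genuine obstacle arises.
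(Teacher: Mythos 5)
Your proof is correct and follows the same route as the paper, whose own proof simply states that the corollary ``follows from the considerations above,'' namely the reduction of \eqref{p} to \eqref{pe} over $\vertex\epi g$ and the computability of the vertices via the polyhedral-projection/MOLP machinery of Section \ref{sec_pp}. You merely make these ingredients explicit (including the harmless extra check that $h$ is finite at each vertex), so there is nothing to add.
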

\begin{proof}
	This follows from the considerations above.
\end{proof}

\begin{remark}
	{
	Of course one can add linear constraints to Problem \eqref{p} in case $g$ is polyhedral. These constraints can be included into a representation of $g$, see e.g. Example \ref{ex:loc_an}.}
\end{remark}

\section{The case of $h$ being polyhedral}\label{sec_h}

We assume now that the function $h$ in Problem \eqref{p} is polyhedral. We consider the Toland-Singer dual problem  of \eqref{p}, see \cite{Sin79,Tol78}, that is,
\begin{equation}\label{d}\tag{D}
	\min_{y \in \dom h^*} [h^*(y) - g^*(y)],
\end{equation}
where $h^*(y) := \sup_{x \in \R^n} [{y}^Tx - h(x)]$ is the conjugate of $h$ and likewise for $g$. Due to the duality theory by Toland and Singer the optimal objective values of \eqref{p} and \eqref{d} coincide  in case {of $h$ being closed}. This condition is satisfied as we assume $h$ to be a polyhedral convex function.

Since $g^*$ is convex and $h^*$ is polyhedral convex, Problem \eqref{d} can be solved by applying Algorithm \ref{primal_alg} in Section \ref{sec_g}. As pointed out below, a solution of \eqref{p} can be computed from a solution of \eqref{d}. We suppose the following assumptions, all being related to  the given problem \eqref{p}:

\bigskip\noindent
\begin{assumptions}\label{ass_dual}
	\mbox{}
\begin{enumerate}[(i)]
	\item (P) has an optimal solution,	\label{b1}
	\item $h$ is polyhedral convex, \label{b2}
	\item $\dim \epi h = n+1$,\label{b3}
	\item a representation $A$ of $h$ is known,\label{b4}
	\item $g$ is closed. \label{b5}
	\item  A solution of the following problem exists whenever the problem is bounded for the parameter $y \in \R^n$:
    \begin{equation}\label{p3}
    	\min_{x \in \R^n} [g(x) - {y}^T x].
    \end{equation}	\label{b6}
\end{enumerate}
\end{assumptions}
The following result relates solutions of \eqref{p} and solutions of \eqref{d} to each other. We denote by $\partial f(\bar x):= \cb{y \in \R^n\st \forall x \in \R^n: f(x) \geq f(\bar x) + {y}^T(x-\bar x)}$ the {\em subdifferential} of a convex function $f:\R^n \to \R \cup \cb{ \infty}$ at $\bar x \in \dom f$. We set $\partial f(\bar x) := \emptyset$ for $\bar x \not\in\dom f$. For convenience of the reader and {with regard to} Remark \ref{rem1} below, we provide a short {proof} of the following statement.
\begin{proposition}{(e.g. \cite[Proposition 4.7]{HT99} or \cite[Proposition 3.20]{Tuy98})} \label{p_DC}
	Let $g:\R^n \to \R\cup\cb{\infty}$ and $h:\R^n \to \R\cup\cb{\infty}$ be proper convex functions. Then the following holds true.
	\begin{enumerate}[(i)]
	\item If $x$ is an optimal solution of \eqref{p}, then each $y \in \partial h(x)$ is an optimal solution of \eqref{d}.			
	\end{enumerate}	
	If, in addition, $g$ and $h$ are closed, a dual statement holds:
	\begin{enumerate}[(i)]
	\item[(ii)] If $y$ is an optimal solution of \eqref{d}, then each $x \in \partial g^*(y)$ is an optimal solution of \eqref{p}.
	\end{enumerate}
\end{proposition}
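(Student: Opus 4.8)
The plan is to prove both statements directly from the Toland--Singer duality relation, which asserts that under the stated closedness hypotheses the optimal values of \eqref{p} and \eqref{d} coincide; call this common value $\omega$. For part (i), I would start from an optimal solution $x$ of \eqref{p}, so $g(x)-h(x)=\omega$, and pick $y\in\partial h(x)$. The subgradient inequality $h(x')\geq h(x)+y^T(x'-x)$ for all $x'$ rearranges to $y^Tx'-h(x')\leq y^Tx-h(x)$ for all $x'$, and taking the supremum over $x'$ gives $h^*(y)=y^Tx-h(x)$, i.e.\ the Fenchel--Young equality holds at $(x,y)$. On the other hand, by definition of the conjugate, $g^*(y)\geq y^Tx-g(x)$. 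Subtracting, $h^*(y)-g^*(y)\leq \ofg{y^Tx-h(x)} - \ofg{y^Tx-g(x)} = g(x)-h(x) = \omega$. Since $\omega$ is also the optimal value of \eqref{d}, $y$ must be optimal for \eqref{d}. One small point to address: if $\partial h(x)=\emptyset$ the statement is vacuous, and since $h$ is proper convex and $x$ is feasible for \eqref{p} (hence $x\in\dom g\cap\dom h$, as otherwise the objective would be $+\infty$ or undefined), $\partial h(x)$ is in fact nonempty at least when $x$ lies in the relative interior of $\dom h$; but the clean way is simply to note that the claim is an implication ranging over $y\in\partial h(x)$, so emptiness causes no issue.

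For part (ii), I would exploit the symmetry of the DC duality: when $g$ and $h$ are closed, $g=g^{**}$ and $h=h^{**}$, so \eqref{p} is precisely the Toland--Singer dual of \eqref{d} (the conjugate of $h^*$ is $h$, the conjugate of $g^*$ is $g$, and $\dom g^{*}$ plays the role of the feasible set). Thus part (ii) is just part (i) applied to the pair $(h^*,g^*)$ in place of $(g,h)$: if $y$ solves \eqref{d}, then each $x\in\partial g^*(y)$ solves the dual of \eqref{d}, which is \eqref{p}. To make this airtight I would spell out that $\partial g^*(y)$ is the object playing the role of $\partial h(x)$ under the substitution (indeed $g^*$ is the ``subtracted'' function in \eqref{d} just as $h$ is in \eqref{p}), and that the closedness of $g$ guarantees $g^{**}=g$ so the reconstructed primal is genuinely \eqref{p}. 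Alternatively, one can repeat the Fenchel--Young argument verbatim: $x\in\partial g^*(y)$ gives $g^*(y)+g^{**}(x)=y^Tx$, and since $g$ is closed $g^{**}=g$, so $g(x)=y^Tx-g^*(y)$; combined with $h(x)\geq y^Tx-h^*(y)$ this yields $g(x)-h(x)\leq h^*(y)-g^*(y)=\omega$, whence $x$ is optimal for \eqref{p}.

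I expect the only real subtlety to be bookkeeping about properness/feasibility and the precise invocation of Toland--Singer duality: one must ensure the value $\omega$ is finite (which follows from \eqref{p} having an optimal solution, as assumed in the surrounding Assumptions, though the Proposition as stated only asks for proper convex $g,h$ — here I would either add the implicit assumption that the optimal value is finite, or observe that if $\omega=-\infty$ both problems are unbounded and the statements are vacuous, and if the feasible optimal solution exists then $\omega\in\R$). No heavy machinery is needed beyond the conjugacy calculus; the argument is essentially two applications of the Fenchel--Young (in)equality glued together by the duality equality of optimal values.
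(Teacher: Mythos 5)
Your argument is correct in substance, but it is organized differently from the paper's and has one loose end. The paper proves (ii) by a single self-contained chain: optimality of $y$ for \eqref{d}, the Fenchel--Young equality $g(x)+g^*(y)={y}^Tx$ (valid because $g$ is closed and $x\in\partial g^*(y)$), Fenchel's inequality for $g$ and $h$, and finally an infimum over $y^*\in\dom h^*$ that produces $h^{**}=h$; statement (i) is then said to follow analogously without any biconjugation. You instead treat the Toland--Singer coincidence of optimal values as a black box and combine it with the same two Fenchel--Young (in)equalities; your symmetry argument for (ii) (apply (i) to the pair $(h^*,g^*)$ and use $g^{**}=g$, $h^{**}=h$ to recognize the dual of \eqref{d} as \eqref{p}) is a legitimate alternative to the paper's direct chain, and your ``verbatim'' variant of (ii) is essentially the paper's proof with the infimum-over-$y^*$ step replaced by the cited duality theorem. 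What the paper's route buys is self-containedness -- the needed direction of Toland--Singer duality is re-derived inside the proof; what yours buys is brevity, at the price of importing the duality theorem.

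The loose end is in part (i): its hypotheses do not include closedness of $g$ or $h$, yet you invoke ``$\omega$ is also the optimal value of \eqref{d}'', which, as you yourself phrase it, Toland--Singer guarantees only under closedness of $h$. Without closedness the equality of values can fail; what always holds is the weak-duality inequality that the optimal value of \eqref{d} is at least that of \eqref{p}. Fortunately this inequality is all you need, and it is a two-line Fenchel computation requiring no closedness: for any $y^*\in\dom h^*$ one has $h^*(y^*)-g^*(y^*)=h^*(y^*)+\inf_{z}\sqb{g(z)-{y^*}^Tz}\geq \inf_z\sqb{g(z)-h(z)}=g(x)-h(x)$, since $h^*(y^*)\geq {y^*}^Tz-h(z)$ for every $z$. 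Combined with your estimate $h^*(y)-g^*(y)\leq g(x)-h(x)$ (and the observation that $y\in\dom h^*$ because $h^*(y)={y}^Tx-h(x)<\infty$, so $y$ is feasible for \eqref{d}), this yields optimality of $y$ with no closedness assumption at all. With that substitution your proof of (i) is complete, and your symmetry derivation of (ii) -- where both conjugates are closed, so citing the value equality is legitimate -- goes through as written.
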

\begin{proof} {Let $y\in \dom h$ be optimal for \eqref{d} and $x\in\partial g^*(y)$.} To prove statement (ii), we note that we have
	\begin{equation}\label{eq:sol_d}
		\forall y^* \in \dom h^*: \; h^*(y^*)-g^*(y^*) \geq h^*(y)-g^*(y),
	\end{equation}
	\begin{equation}\label{eq:f_e}
		g(x) + g^*(y) = {y}^Tx,
	\end{equation}
	where the latter equation requires $g$ being closed, compare \cite[Theorem 23.5]{Rockafellar72}.
	Moreover we use Fenchel's inequality, that is, $g^*(y^*) + g({z}) \geq  {y^*}^T{z}$ for all ${z}\in \R^n$ and all $y^* \in \R^n$ (and likewise for $h$). For all $y^* \in \dom h^*$ and all $x \in \dom g$ we get
\begin{align*}
	g({z}) + h^*(y^*)- {y^*}^T{z} & \geq h^*(y^*)-g^*(y^*) \stackrel{\eqref{eq:sol_d}}{\geq} h^*(y)- g^*(y)\\
								& \geq  {y}^Tx - h(x) - g^*(y) \stackrel{\eqref{eq:f_e}}{=} g(x) - h(x).
\end{align*}
Taking the infimum over $y^* \in \dom h^*$, we get
$$ \forall {z} \in \dom g:\; g({z}) - h^{**}({z}) \geq g(x) - h(x).$$
The claim follows as $h^{**}({z}) = h({z})$. Here we use that $h$ is assumed to be closed, compare \cite[Theorem 12.2]{Rockafellar72}.
The proof of (i) is analogous but neither $g$ nor $h$ needs to be closed, compare \cite[Theorem 23.5]{Rockafellar72} and take into account the fact that no biconjugation argument is necessary.
\end{proof}

\begin{remark} \label{rem1} In \cite[Proposition 4.7]{HT99} the assumption of $g$ and $h$ being closed for statement (ii) of Proposition \ref{p_DC} is missing. Moreover in \cite[Proposition 3.20]{Tuy98} closedness of $g$ needs to be assumed. Indeed, consider the following example.
	Let 
	$$ f:\R\to \R\cup\cb{\infty},\quad f(x):=\left\{\begin{array}{rcl} \infty & \text{ if } & x < 0\\
	1 & \text{ if } & x=0\\
	0 & \text{ if } & x>0.		
	\end{array}\right. $$
	Then $f$ is not closed and we have
	$$ f^*:\R\to \R\cup\cb{\infty},\quad f^*(y):=\left\{\begin{array}{rcl} 0 & \text{ if } & y \leq 0\\
	\infty & \text{ if } & y>0.		
	\end{array}\right. $$
	Set $g=f$ and $h=\cl f$. Then $y=0$ solves \eqref{d} and $0 \in \partial g^*(0)$. But $x=0$ is not a solution of \eqref{p}. This shows the  failure when $g$ is not closed. 
	
	Set $g=\cl f$ and $h=f$. Then $y=0$ solves \eqref{d} and $1 \in \partial g^*(0)$. But $x=1$ is not a solution of \eqref{p}. This shows the failure when $h$ is not closed. 	
\end{remark}

\begin{proposition}{(e.g. \cite[Theorem 23.5]{Rockafellar72})}\label{p5} Let $g:\R^n \to \R$ be a proper closed convex function. Then $\bar x \in \partial g^*(y)$ if and only if $\bar x$ is an optimal solution of \eqref{p3}.
\end{proposition}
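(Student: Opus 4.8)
The plan is to prove the equivalence between $\bar x \in \partial g^*(y)$ and $\bar x$ solving Problem~\eqref{p3} by passing through the Fenchel--Young equality, exactly as in the classical conjugate duality theory. First I would recall that for a proper closed convex function $g$ one has $g^{**}=g$, so the biconjugation identity
$$ g(\bar x) + g^*(y) = y^T \bar x $$
holds if and only if $y \in \partial g(\bar x)$, and symmetrically if and only if $\bar x \in \partial g^*(y)$ (this is \cite[Theorem 23.5]{Rockafellar72}, which we are allowed to invoke). Thus it suffices to show that the Fenchel equality above characterizes optimality of $\bar x$ for \eqref{p3}.

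Next I would unfold the definition of the conjugate. By definition, $g^*(y) = \sup_{x \in \R^n} [y^T x - g(x)] = -\inf_{x\in\R^n}[g(x) - y^T x]$, so the optimal value of \eqref{p3} is precisely $-g^*(y)$. Therefore $\bar x$ is an optimal solution of \eqref{p3} if and only if $g(\bar x) - y^T\bar x = -g^*(y)$, i.e. if and only if $g(\bar x) + g^*(y) = y^T\bar x$. Combining this with the previous paragraph gives the claimed equivalence, and also implicitly records that \eqref{p3} has a finite optimal value exactly when $y \in \dom g^*$.

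I expect the main obstacle to be essentially bookkeeping rather than a genuine difficulty: one must be careful that $\bar x \in \dom g$ (otherwise $g(\bar x) = \infty$ and the equality cannot hold, but then $\bar x$ is trivially not optimal as long as $\dom g \neq\emptyset$, which is guaranteed by properness of $g$), and one must make sure the supremum defining $g^*(y)$ is attained precisely at the minimizers of \eqref{p3}. Closedness of $g$ is used only to guarantee $g^{**}=g$ so that the subdifferential of $g^*$ can be translated back into the Fenchel equality; without it the implication from $\bar x\in\partial g^*(y)$ to optimality of $\bar x$ for \eqref{p3} could fail, as the example in Remark~\ref{rem1} illustrates. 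Since all the pieces are standard and already available in the cited references, the proof is short and I would simply assemble the two observations above.
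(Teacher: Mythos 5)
Your argument is correct: reducing $\bar x\in\partial g^*(y)$ to the Fenchel--Young equality $g(\bar x)+g^*(y)=y^T\bar x$ via \cite[Theorem 23.5]{Rockafellar72} and observing that the optimal value of \eqref{p3} is $-g^*(y)$ is exactly the standard reasoning behind this statement. The paper itself offers no proof but simply cites the same theorem of Rockafellar, so your proposal coincides with the intended justification.
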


The next result provides a consequence of Assumption {\ref{ass_dual}~}\eqref{b3}.

\begin{proposition} \label{p4} Let $h:\R^n\to \R\cup\cb{\infty}$ be a polyhedral convex function such that $\dim \epi h = n+1$. Then $\epi h^*$ has a vertex.
\end{proposition}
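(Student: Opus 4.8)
The claim is that if $h$ is polyhedral convex with $\dim \epi h = n+1$, then $\epi h^*$ has a vertex. The natural approach is to use the standard fact that a polyhedral convex set $Q$ has a vertex if and only if it contains no line, equivalently if and only if its lineality space $L(Q) := 0^+Q \cap (-0^+Q)$ is trivial. So the plan is to show that the recession cone of $\epi h^*$ contains no nontrivial subspace.

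First I would recall that $\epi h$, being the epigraph of a proper polyhedral convex function, is a nonempty polyhedron, and by Proposition \ref{rep_conj} (or directly from convex analysis) $h^*$ is again a proper polyhedral convex function, so $\epi h^*$ is a nonempty polyhedron; hence the vertex-existence question reduces purely to the lineality space. Next I would translate the dimension hypothesis: $\dim \epi h = n+1$ means $\epi h$ has nonempty interior in $\R^{n+1}$, equivalently $\dom h$ has nonempty interior in $\R^n$ and $h$ is finite there. The key step is then the duality dictionary between faces/dimension of $\epi h$ and recession directions of $\epi h^*$. Concretely, a direction $(y,\rho) \in \R^n \times \R$ lies in the lineality space of $\epi h^*$ iff both $(y,\rho)$ and $(-y,-\rho)$ are recession directions of $\epi h^*$. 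A recession direction $(y,\rho)$ of $\epi h^*$ with the property that $-(y,\rho)$ is also recession forces, via the support-function interpretation of $\epi h^*$, that the function $x \mapsto y^T x$ is constant (equal to some fixed value determined by $\rho$) on $\dom h$ after the appropriate identification — more precisely, such a line in $\epi h^*$ corresponds to an affine function $x \mapsto y^T x - \rho$ that agrees with a "flat direction" of $h$ in a way incompatible with $\epi h$ being full-dimensional. I would make this precise by noting that $(y,\rho)$ recessive for $\epi h^*$ means $h^*(\cdot + ty) \le h^*(\cdot) + t\rho$ for all $t \ge 0$; combined with the reverse inequality from $-(y,\rho)$ this gives $h^*(u + ty) = h^*(u) + t\rho$ for all $u \in \dom h^*$ and all $t \in \R$, i.e. $h^*$ is affine in the direction $y$ with slope $\rho$. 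Passing to conjugates, this affineness of $h^*$ along $y$ is equivalent to $\dom h$ being contained in the hyperplane (or lower-dimensional affine set) $\{x : y^T x = \rho\}$ — unless $y = 0$, in which case $\rho \ge 0$ is forced to be $0$ as well. Since $\dim \epi h = n+1$ forces $\dim \dom h = n$, the set $\dom h$ cannot lie in any proper affine subspace, so $y = 0$ and $\rho = 0$: the lineality space is trivial.

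The main obstacle, and the step I would spend the most care on, is making the correspondence "$(y,\rho)$ is a line-direction of $\epi h^*$" $\iff$ "$y^T x = \rho$ on $\dom h$" fully rigorous, including the boundary/normalization subtleties when $\rho \ne 0$ (a recession direction of an epigraph must have nonnegative last coordinate, and a bona fide line in the epigraph forces the last coordinate to be exactly $0$; so in fact the only candidate line directions are of the form $(y,0)$, which streamlines things considerably). Once one observes that the last coordinate of any line direction of $\epi h^*$ must vanish, a line direction is just $(y,0)$ with $y$ in the lineality space of $\dom h^* = \partial$-range of $h$; dually, $y \in L(\dom h^*)$ iff $h$ is affine along $y$ iff $\dom h$ lies in an affine subspace orthogonal to $y$, contradicting full dimensionality unless $y=0$. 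I would present the argument in this cleaned-up form: reduce to line directions with zero last coordinate, identify them with the lineality space of $\dom h^*$, and use the elementary fact that $L(\dom h^*)^\perp \supseteq \mathrm{aff}(\dom h) - \mathrm{aff}(\dom h)$, which under $\dim \dom h = n$ is all of $\R^n$, hence $L(\dom h^*) = \{0\}$ and $\epi h^*$ has a vertex.
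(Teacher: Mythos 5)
Your overall route differs from the paper's: the paper constructs a vertex explicitly, picking $x\in\Int\dom h$ at which $h$ is affine in a neighbourhood (possible since $\dim\epi h=n+1$), so that $\partial h(x)=\cb{\bar y}$ is a singleton, and then using the equality case of the Fenchel--Young inequality to show that the hyperplane $\cb{(z,r^*)\st r^*=z^Tx-h(x)}$ supports $\epi h^*$ exactly at $(\bar y,h^*(\bar y))$, which is therefore a vertex. Your plan --- show that $\epi h^*$ is a nonempty polyhedron containing no line --- is a legitimate alternative, and the argument in your middle paragraph does carry it through: a line direction $(y,\rho)$ of $\epi h^*$ means $h^*(u+ty)=h^*(u)+t\rho$ for all $t\in\R$; conjugating (using $h=h^{**}$, as $h$ is polyhedral, hence closed) gives $h(x)=\infty$ whenever $y^Tx\neq\rho$, i.e.\ $\dom h\subseteq\cb{x\st y^Tx=\rho}$; since $\dim\epi h=n+1$ forces $\dim\dom h=n$, this yields $y=0$, and then $\rho=0$ because otherwise $(0,\rho)$ or $(0,-\rho)$ would be a recession direction of the epigraph of the proper function $h^*$ with negative last coordinate attached to a fixed first coordinate, which is impossible. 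That is a complete and correct proof, no harder than the paper's.

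The problem is the ``cleaned-up form'' you say you would actually present: both of its ingredients are false. First, recession directions of an epigraph need not have nonnegative last coordinate, and a line contained in an epigraph need not have last coordinate zero: for $h=\mathds{I}_{\cb{a}}$, the indicator of a point $a\neq0$, one has $h^*(y)=a^Ty$, and $\epi h^*$ contains lines with direction $(y,a^Ty)$, $a^Ty\neq0$. Under the hypotheses of the proposition no line exists at all --- but that is precisely what you are proving, so you cannot discard the case $\rho\neq0$ by appeal to a (nonexistent) general fact; it must be handled, as your general $(y,\rho)$ argument does. Second, the identification of line directions $(y,0)$ of $\epi h^*$ with the lineality space $L(\dom h^*)$, and the ``elementary fact'' $L(\dom h^*)^\perp\supseteq \mathrm{aff}(\dom h)-\mathrm{aff}(\dom h)$, are both wrong: for $h=\mathds{I}_{[-1,1]^n}$ one has $h^*=\norm{\cdot}_1$, so $\dom h^*=\R^n$ and $L(\dom h^*)=\R^n$, yet $\dom h$ is full-dimensional and $\epi h^*$ contains no line whatsoever. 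What your proof needs is the statement about $\epi h^*$ (equivalently, about $h^*$ being affine along $y$), not about $\dom h^*$. So keep the general $(y,\rho)$ argument and drop the streamlined version.
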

\begin{proof} 
	The assumption implies that there is a point $x \in \Int\dom h$, such that $h$ is affine in a neighborhood of $x$. Whence $\partial h(x)$ is a singleton set $\cb{\bar{y}}=\cb{\nabla h(x)}$. The Fenchel-Young inequality implies
	$$ \forall z \in \R^n :\quad h^*(z) \geq  {z}^Tx - h(x).$$
		Using \cite[Theorem 23.5]{Rockafellar72}, we see that equality holds if and only if $z=\bar{y}$. This means that 
	$$ H=\cb{(z,r^*) \in \R^{n+1} \st  r^* =  {z}^Tx - h(x)}$$
	is a supporting hyperplane of $\epi h^*$ such that $\epi h^* \cap H = \cb{(\bar{y},h^*(\bar{y}))}$, i.e.,
	$(\bar{y},h^*(\bar{y}))$ is a vertex of $\epi h^*$.
\end{proof}

We obtain the following algorithm:

\begin{algorithm+}\label{dual_alg}
	{Assume that Assumptions \ref{ass_dual}~\eqref{b1}--\eqref{b6} are satisfied.}
	
	\begin{algorithm}[H]
		\KwData{representation $A$ of $h$, {function} $g$.}
		\KwResult{$\bar x$ is an optimal solution of \eqref{p}. }
		\begin{enumerate}[(1)]
			\item Compute the vertices $(y^1,s^1),\dots,(y^k,s^k)$ of $\epi h^*$ by solving a polyhedral projection problem, see Section \ref{sec_pp}.
			\item Determine $(y^*,s^*)$ such that $s^* - g^*(y^*) = \min_{i=1,\dots,k} [s^i - g^*(y^i)]$,
			\item Compute an optimal solution $\bar x$ of \eqref{p3} with parameter $y^*$.
		\end{enumerate}
	\end{algorithm}
\end{algorithm+}

\smallskip

\begin{theorem}
	Let the Assumptions {\ref{ass_dual}~}\eqref{b1}--\eqref{b6} be satisfied. Then Algorithm \ref{dual_alg} works correctly.
\end{theorem}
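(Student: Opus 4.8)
The plan is to observe that Algorithm \ref{dual_alg} is, in its steps (1)--(2), nothing but Algorithm \ref{primal_alg} applied to the Toland--Singer dual \eqref{d}, regarded as a DC program of the form \eqref{p} in which the \emph{polyhedral} convex function $h^*$ plays the role of $g$ and the convex function $g^*$ the role of $h$; step (3) then turns an optimal solution of \eqref{d} into one of \eqref{p} via Propositions \ref{p5} and \ref{p_DC}(ii). So I would proceed in three stages: (a) verify that Assumptions \ref{ass_primal}~\eqref{a1}--\eqref{a4} hold for problem \eqref{d}; (b) invoke the correctness of Algorithm \ref{primal_alg} to conclude that the point $y^*$ produced after step (2) is optimal for \eqref{d}; (c) show that step (3) is well posed and returns an optimal solution of \eqref{p}.

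For stage (a): $h^*$ is polyhedral convex because $h$ is, and by Proposition \ref{rep_conj} (see Example \ref{ex0d}) a representation of $h^*$ is obtained directly from the representation $A$ of $h$ assumed in \eqref{b4}; this settles \eqref{a2} and \eqref{a4} for \eqref{d}. Assumption \eqref{b3} together with Proposition \ref{p4} gives that $\epi h^*$ has a vertex, i.e.\ \eqref{a3}. For \eqref{a1} I would use \eqref{b1}: if $x$ is an optimal solution of \eqref{p} then $x\in\dom g$, and since an optimal solution has finite value we also get $x\in\dom h$, so $g$ and $h$ are proper and, $h$ being polyhedral convex, $\partial h(x)\neq\emptyset$; by Proposition \ref{p_DC}(i) every $y\in\partial h(x)$ is optimal for \eqref{d}, so \eqref{d} has an optimal solution. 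For stage (b): with \eqref{a1}--\eqref{a4} in force, steps (1)--(2) of Algorithm \ref{dual_alg} coincide literally with steps (1)--(2) of Algorithm \ref{primal_alg} run on \eqref{d} (the selection $s^*-g^*(y^*)=\min_i[s^i-g^*(y^i)]$ being precisely the minimisation over $\vertex\epi h^*$ in \eqref{pe}), so by the correctness of Algorithm \ref{primal_alg} the point $y^*$ solves \eqref{d}. For stage (c): since \eqref{d} has an optimal solution, its optimal value $h^*(y^*)-g^*(y^*)$ is finite, so in particular $g^*(y^*)<\infty$; hence the objective $g(x)-{y^*}^Tx$ of \eqref{p3} is bounded below, its infimum being $-g^*(y^*)\in\R$ because $g$, proper closed convex by \eqref{b5}, has a proper conjugate. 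By \eqref{b6}, problem \eqref{p3} with parameter $y^*$ then has an optimal solution, which is the $\bar x$ computed in step (3). By Proposition \ref{p5} (applicable since $g$ is proper closed convex) this means $\bar x\in\partial g^*(y^*)$, and since $g$ is closed by \eqref{b5} and $h$ is closed as a polyhedral convex function, Proposition \ref{p_DC}(ii) applies and yields that $\bar x$ is an optimal solution of \eqref{p}.

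I expect the main obstacle to be not combinatorial but the careful handling of the convex-analytic side conditions: one must make sure that \eqref{d} genuinely possesses a finite-valued optimal solution, since this is exactly what simultaneously licenses the application of Algorithm \ref{primal_alg} to \eqref{d} and guarantees boundedness of \eqref{p3} so that \eqref{b6} can be used; and one must not drop the closedness of $g$ from \eqref{b5}, without which Proposition \ref{p_DC}(ii) fails, as the counterexample in Remark \ref{rem1} shows. The remaining work is bookkeeping: matching the roles of $g,h$ in \eqref{p} with those of $h^*,g^*$ in \eqref{d} and quoting Propositions \ref{rep_conj}, \ref{p4}, \ref{p5} and \ref{p_DC}.
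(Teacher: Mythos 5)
Your proposal is correct and takes essentially the same route as the paper's own proof: establish that \eqref{d} has an optimal solution via $\partial h(\bar x)\neq\emptyset$ and Proposition \ref{p_DC}(i), check Assumptions \ref{ass_primal}~\eqref{a1}--\eqref{a4} for \eqref{d} (using Proposition \ref{p4} and Example \ref{ex0d}) so that Steps (1)--(2) amount to Algorithm \ref{primal_alg} applied to \eqref{d}, and then recover a solution of \eqref{p} from boundedness of \eqref{p3}, Assumption \eqref{b6}, Proposition \ref{p5} and Proposition \ref{p_DC}(ii). No gaps to report.
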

\begin{proof}
Note first that \eqref{p} has a solution $\bar x$ by Assumption {\ref{ass_dual}~}\eqref{b1}. A solution $\bar x$ of (P) is by definition a point in $\dom g \cap \dom h$. Since $h$ is polyhedral convex and $\bar x \in \dom h$, we have $\partial h(\bar x)\neq \emptyset$, see e.g. \cite[Theorem 23.10]{Rockafellar72}. By Proposition \ref{p_DC} (i), a point $\bar{y} \in \partial h(\bar x)$ is a solution to \eqref{d}. Whence a solution of \eqref{d} exists. In Steps (1) and (2) of the algorithm, a solution $\bar{y}$ of \eqref{d} is computed. To see this, we need to check  Assumptions {\ref{ass_dual}~}\eqref{b1}--\eqref{b6} of Algorithm \ref{primal_alg} in Section \ref{sec_g}, reformulated for problem \eqref{d}:
	\begin{itemize}
		\item (D) has an optimal solution, as shown above.
		\item $h^*$ is polyhedral convex as so is $h$.
		\item $\epi h^*$ has a vertex. This follows from Proposition \ref{p4}. 
		\item A representation $A$ of $h^*$ is known, as it can be obtained from a representation of $h$, see Example \ref{ex0d} in Section \ref{sec_rep}.
	\end{itemize}
	Since $\bar{y}$ computed in Steps (1) and (2) is a solution to (D), we have $\bar{y} \in \dom g^*$. Hence Problem \eqref{p3} is bounded (by the definition of conjugates). In this case, by Assumption {\ref{ass_dual}~}\eqref{b4}, an optimal solution $\bar x$ of \eqref{p3} exists. By Proposition \ref{p5}, we get $\bar x \in \partial g^*(\bar{y})$. Proposition \ref{p_DC} (ii) implies that $\bar x$ is an optimal solution of \eqref{p}.
\end{proof}

\begin{remark}\label{rev_rem1}
	
	One can add convex constraints $f_i(x)\leq 0$ $(i=1,\dots,m)$ to Problem \eqref{p} in case $h$ is polyhedral. Setting $g(x) = \infty$ whenever one constraint is violated, we maintain convexity of $g$. If $g$ and $f_i$ $(i=1,\dots,m)$ are closed, so is the modified $g$. The only difference in the algorithm is that the constraints have to be added to Problem \eqref{p3}.
\end{remark}

\begin{remark}\label{rev_rem2}
	 An alternative approach for minimizing a DC function $f=g-h$ where $h$ is polyhedral is the following: The domain of $h$ can be ``partitioned'' into finitely many convex polyhedral sections $S_i$ (such that the intersection of the interior of any $S_i$ and $S_j$ for $i\neq j$ is empty) such that $h$ is affine on each $S_i$.  Minimizing $g-h$ over such a section $S_i$ is a convex program with linear constraints. Solving such a convex program for each $S_i$ and choosing the one with smallest optimal value, we obtain an optimal solution of the original problem. To compute the sections $S_i$ one needs to solve a polyhedral projection problem (or similar) involving the epigraph of $h$, which is not easier than Step 1 in Algorithm \ref{dual_alg}. The advantage of our algorithm is as follows: First, in case the conjugate of $g$ is known, we have to solve only one convex program. Secondly, only unconstrained convex programs need to be solved in our algorithm. 
	
\end{remark}

\section{Polyhedral projection via multiple objective linear programming} \label{sec_pp}

\noindent
For matrices $B \in \R^{m \times n}$, $C\in \R^{m \times k}$ and a vector $c \in \R^m$, we consider the problem 
\begin{equation}\label{PP}\tag{PP}
\text{compute }\quad	 Y = \cb{x \in \R^n \st \exists u \in \R^k: B x + C u \geq c}.
\end{equation}
Solving \eqref{PP} {essentially means finding } a V-representation of $Y$, see \cite{LoeWei15} for details.
Using the terminology of Section \ref{sec_rep}, \eqref{PP} provides a P-representation of $Y$.
The polyhedron $Y$ can be seen as the projection of the polyhedron 
\begin{equation}\label{X}
	  X = \cb{(x,u) \in \R^n \times \R^k \st B x + C u \geq c}
\end{equation}
{onto } $\R^n$. 

In \cite{LoeWei15}, the equivalence between the projection problem \eqref{PP} and multiple objective linear programming (MOLP) is shown. In particular, \cite[Theorem 3]{LoeWei15} yields that a V-representation of $Y$ can be obtained by solving the multiple objective linear program
\begin{equation*}\label{molp}
	\tag{MOLP}	\min_{x\in\R^n,u\in\R^k} \begin{pmatrix} x \\ -e^T x\end{pmatrix} \quad \text{ subject to }\quad B x + C u \geq c,
\end{equation*}
where $e=(1,\dots,1)^T$. 

A MOLP-solver such as {\sl Bensolve} \cite{bensolve,LoeWei16} computes not only a solution to \eqref{molp} in the sense of \cite{HeyLoe11, Loehne11} but also a V-representation of the so-called {\em upper image}. The upper image of \eqref{molp} is the set
$$ \PP := \cb{z \in \R^{n+1} \bigg|\; \exists u \in \R^k,\, \exists x \in \R^n:\; z \geq \begin{pmatrix} x \\ -e^T x\end{pmatrix}, \; Bx + Cu \geq c},$$
whereas the {\em image} of \eqref{molp}, in the literature also called {\em feasible set in the objective space}, is
$$ Q :=  \cb{z \in \R^{n+1} \bigg|\; \exists u \in \R^k,\, \exists x \in \R^n:\; z = \begin{pmatrix} x \\ -e^T x\end{pmatrix}, \; Bx + Cu \geq c}.$$
Clearly, we have
$$ \PP = Q + \R^{n+1}_+$$
and
$$ Q \subseteq \cb{z \in \R^{n+1} |\; e^T z = 0}.$$
Thus, a V-representation of $Q$ can be obtained from a V-representation of $\PP$ by omitting all points and directions $x$ with $e^T z \neq 0$. 
Evidently, a V-representation of $Y$ can be obtained from a V-representation of $Q$ by omitting the last component of each vector.

\section{Application and numerical results} \label{sec_app}

 First, we solve an instance of a location problem, then we compare our results with two algorithms from the literature and finally we present two new examples. We used the VLP-Solver Bensolve version 2.0.1 \cite{bensolve, LoeWei16} to compute the polyhedral projection problems. VLP stands for {\em vector linear programming} which is a generalization of MOLP. The remaining part was implemented in Matlab\textregistered\ R2015b. In particular, we use the \verb+linprog+ command if problem \eqref{p3} needs to be solved, which reduces to an LP here. All computations were run on a computer with Intel\textregistered\ Core\texttrademark\ i5 CPU with 3.3 GHz. 

As both $g$ and $h$ are polyhedral convex functions in our first two examples, we obtain two alternatives to solve the problems. Algorithm \ref{primal_alg} developed in Section \ref{sec_g} (case $g$ polyhedral) is referred to as the {\em primal algorithm}, whereas Algorithm \ref{dual_alg} of Section \ref{sec_h} (case $h$ polyhedral) is called the {\em dual algorithm}. As we will see, both algorithms are justified.

\begin{example} \label{ex:loc_an}

The classical location theory is dedicated to the problem of locating a new facility such that all relevant distances, e.g.\ to  customers, are minimized \cite{Dre95,DKSW02,DM85,EL95,Ham95,Nic05,Web09}. Nevertheless, since some facilities also cause negative effects like noise, stench or even danger, such facilities need to be established as far away as possible from nature reserves and residential areas. As examples one may think of an airport, railway station, industrial plant, dump site, power plant or wireless station. 

The problem of locating such a so-called semi-obnoxious facility consists of minimizing the weighted sum of distances to the so-called attraction points (due to economical reasons) and to maximize the weighted sum of distances to the so-called repulsive ones (in order to avoid negative effects).

First attempts to solve location problems with such undesirable facilities appeared in the 1970's \cite{CG78,DasWhi80,GD75}. Algorithms based on a DC formulation are presented for instance in \cite{CHJT92,DW91,MF94,TAKZ95}.

It it reasonable to establish the new facility in a given area, city or state, which we assume to be a bounded polyhedral set $\mathcal{P}=\set{x\in \R^n|\,Px\geq p}\subseteq\dom h$.   The DC location problem can be formulated as
\begin{align}\label{L}\tag{$L$}
\min_{x\in\R^n}[g(x)-h(x)],
\end{align}
with functions $g,h:\;\real^n\to\real_+$ defined as
\begin{align}
g(x):=\mathds{I}_{\mathcal{P}}(x)+\sum_{m=1}^{\Mo}\wo_m\gamma_{\Bo_m}(x-\ao^{m}),&&
h(x):=\sum_{m=1}^{\Mu}\wu_m\gamma_{\Bu_m}(x-\au^{m}),
\end{align}
where $\mathds{I}_{\mathcal{P}}$ denotes the indicator function that attains the value 0, if $x\in\mathcal{P}$, and $+\infty$ otherwise. The parameters $\ao^1,\ldots,\ao^{\Mo}\in\real^n$, $(\Mo\geq 1)$ denote the attracting points with weights $\wo_1,\ldots,\wo_{\Mo}>0$, and  $\au^1,\ldots,\au^{\Mu}\in\real^n$, $(\Mu\geq 1)$ denote the repulsive points with weights $\wu_1,\ldots,\wu_{\Mu}>0$. The {polyhedral} unit balls $\Bo_1,\ldots,\Bo_{\Mo}$ and $\Bu_1,\ldots,\Bu_{\Mu}$ induce gauge distances that we associate with $\ao^1,\ldots,\ao^{\Mo}$ and $\au^1,\ldots,\au^{\Mu}$, respectively. 

For a compact polyhedral set $B\subseteq\real^n$, which contains the origin in its interior, the polyhedral gauge distance $\gamma_B:\;\real^n\to\real$ from $a\in\real^n$ to $x\in\real^n$  is defined as $\gamma_B(x-a):=\min\set{\lambda\geq	0\middle|\,x-a\in\lambda B}$, see
\cite{DM85,Rockafellar72}.\bigskip

The epigraph of $g$ is given by
\begin{align*}
	\epi g
	&=\set{(x,r)\in\mathcal{P}\times\real|\; r\geq \sum_{i=1}^{\Mo}\wo_i\gamma_{\Bo_i}(x-\ao^{i})}\\
	&=\set{(x,r)\in\mathcal{P}\times\real|\; r\geq \sum_{i=1}^{\Mo}\wo_i\min\set{\lambda_i\geq0|\;x-\ao^i\in\lambda_i\Bo_i }}\\
	&=\set{(x,r)\in\mathcal{P}\times\real|\;\exists \lambda\in\real^{\Mo}_+:\; r\geq \sum_{i=1}^{\Mo}\lambda_i\wo_i,\;x-\ao^i\in\lambda_i\Bo_i }\\
	&=\set{(x,r)\in\real^{n+1}|\;\exists \lambda\in\real^{\Mo}:\; r\geq \sum_{i=1}^{\Mo}\lambda_i\wo_i,\;\hat{\overline{B}}_i(x-\ao^i)\leq\lambda_ie;\; Px\geq p;\;\lambda \geq 0 },
\end{align*}
where $\hat{\overline{B}}_1,\ldots,\hat{\overline{B}}_{\Mo}$ are the matrices defining the polyhedral unit balls $\Bo_1,\ldots,\Bo_{\Mo}$, respectively, i.e., $x\in\Bo_i$ if and only if $\hat{\overline{B}}_ix\leq e$ for $ (i=1,\ldots,\Mo)$.\\\smallskip

A representation $\overline{A}=(\overline{B},\overline{b},\overline{C},\overline{c})$ of $g$ is given by
\begin{alignat*}{4}
\overline{B}=\begin{pmatrix}
-\hat{\overline{B}}_1\\
\vdots\\
-\hat{\overline{B}}_{\Mo}\\\hline
0\\
\vdots\\
0\\\hline
0\\\hline
P
\end{pmatrix},\quad
&
\overline{b}=\begin{pmatrix}
0\\\vdots\\0\\\hline
0\\\vdots\\0\\\hline
1\\\hline
0
\end{pmatrix},\quad
&
\overline{C}=\begin{pmatrix}
e&&\\
&\ddots&\\
&&e\\\hline
1&&\\&\ddots&\\&&1\\\hline
-\wo_1&\ldots&-\wo_{\Mo}\\\hline
0&\ldots&0
\end{pmatrix},\quad
&
\overline{c}=\begin{pmatrix}
-\hat{\overline{B}}_1\ao^1\\
\vdots\\
-\hat{\overline{B}}_{\Mo}\;	\ao^{\Mo}\\\hline
	0\\\vdots\\0\\\hline
	0\\\hline
	p
\end{pmatrix}.
\end{alignat*}
A representation of $h$ is given likewise.

The  epigraphs $\epi g^*$ and $\epi h^*$ and corresponding representations can easily be obtained by applying Example \ref{ex0d}  and Proposition \ref{rep_conj}. Moreover,    Assumptions {\ref{ass_primal}~}\eqref{a1}--\eqref{a4} in Section \ref{sec_g} (case $g$ polyhedral) and {\ref{ass_dual}~}\eqref{b1}--\eqref{b6}  in Section \ref{sec_h} (case $h$ polyhedral) are satisfied and hence, Algorithms \ref{primal_alg} and \ref{dual_alg} can be applied in order to solve the location problem \eqref{L}.

We note further that the conjugates of $g$ and $h$ can be written as
\begin{align}\label{g_star}
	g^*(y)&=\min\cbgg{\sum_{i=1}^{\Mo}\left[{\ao^i}^Ty^i+\mathds{I}_{\wo_i\Bo_i^*}(y^i)\right] +\sigma_{\mathcal{P}}(y^0)\bigg\vert\; \sum_{i=0}^{\Mo} y^{i}=y}\\
		h^*(y)&=\min\cbgg{\sum_{i=1}^{\Mu}\left[{\au^i}^Ty^i+\mathds{I}_{\wu_i\Bu_i^*}(y^i)\right] \bigg\vert\; \sum_{i=1}^{\Mu} y^{i}=y}
\end{align}
\cite{WMLT16}, where 
	$B^*:=\set{y\in\real^n\middle|\,\forall x\in B:\;x^Ty\leq 1}$
defines the dual unit ball of $B$. If $B$ is polyhedral then so is $B^*$.

We next solve several instances of \eqref{L} in the plane.
In dependence of the number of attracting and repulsive points, Tables \ref{tab:1} and \ref{tab:2} show the computational results of the primal and dual algorithm, respectively. In most cases, the primal algorithm performs better. 

\begin{remark}
In Step (2) of Algorithm \ref{dual_alg}, $g^*(y^i)$ is calculated for all vertices $(y^i, s^i)$, $i=1,\ldots,k$, of $\epi h^*$, obtained in Step (1). In general, we cannot directly calculate these objective values, see for example the representation \eqref{g_star}, which constitutes a linear program. In the general setting, the convex program \eqref{p3} needs to be solved for each vertex. On the other hand, in Step (2) of Algorithm \ref{primal_alg}, we need to calculate $h(x^i)$ for all vertices $(x^i, r^i)$, $i=1,\ldots,k$, of $\epi g$, obtained in Step (1). If (in case both $g$ and $h$ are polyhedral) only a representation of $h$ (i.e. a P-representation $\epi h$) is given, a general procedure to get the values $h(x^i)$ is to solve the linear program $\min r$ s.t. $(x^i,r) \in \epi h$. However, in the present example (and often in practice) one can directly calculate these objective values, i.e. there is no additional linear program to be solved. This is an advantage of the primal algorithm for our example from locational analysis.	
\end{remark}

Nevertheless, we also observe for the primal algorithm that an increasing number of attracting points influences the running time much more than an increasing number of repulsion points. It is vices versa for the dual algorithm, which leads to settings where the dual algorithm performs even better, such as $\Mo=100$, $\Mu=20$. Thus, both algorithms have their justification.
	
\begin{table}[htp]
\begin{center}
\begin{tabular}{r@{\hskip 1cm}rrrrrrr}
\toprule 
& \multicolumn{7}{c}{$\Mu$} \\ \cmidrule(r){2-8} 
$\Mo$ & 20& 40& 60& 80& 100& 1000& 5000\\ 
\midrule 
20& 0.37& 0.51& 0.53& 0.47& 0.86& 5.16& 52.55\\ 
40& 3.36& 2.89& 5.77& 4.88& 6.77& 22.07& 176.67\\ 
60& 17.92& 23.29& 34.54& 25.34& 37.55& 91.24& 462.10\\ 
80& 74.16& 96.38& 117.54& 154.28& 152.81& 291.03& 899.93\\ 
100& 292.85& 337.41& 289.85& 342.32& 387.56& 945.23& 1797.00\\ 
\bottomrule
\end{tabular}
\caption{Computational results (running time in seconds) for Example \ref{ex:loc_an} using the primal algorithm (case $g$ polyhedral).}
\label{tab:1}
\end{center}
\end{table}
\begin{table}[tph]
\begin{center}
\begin{tabular}{r@{\hskip 1cm}rrrrr}
\toprule 
& \multicolumn{5}{c}{$\Mu$} \\ \cmidrule(r){2-6} 
$\Mo$ & 20& 40& 60& 80& 100\\ 
\midrule 
20& 3.84& 16.64& 63.50& 198.76& 573.40\\ 
40& 10.13& 40.87& 111.28& 358.97& 915.98\\ 
60& 18.17& 103.39& 216.55& 593.61& 1436.48\\ 
80& 44.35& 192.98& 465.88& 943.77& 1778.65\\ 
100& 58.85& 338.85& 688.54& 1316.08& 2793.30\\ 
1000& 91.52& 406.74& 1005.94& 2065.33& 4503.63\\ 
5000& 2077.18& 8082.09& -& -& -\\ 
\bottomrule
\end{tabular}
\caption{Computational results (running time in seconds) for Example \ref{ex:loc_an} using the dual algorithm (case $h$ polyhedral).}
\label{tab:2}
\end{center}
\end{table}	
\end{example}

\begin{example}\label{ex2}
Ferrer, Bagirov and Baliakov \cite{Ferrer2014} introduce the DC extended cutting angle method (DCECAM) and compare it with the DC prismatic algorithm (DCPA) described in \cite{Ferrer2005}. We compare Algorithms \ref{primal_alg} and \ref{dual_alg} with these two methods by means of Example 10 in  \cite{Ferrer2014}: Consider a DC programming problem \eqref{p} with
	$$ g(x) = |x_1-1|+200 \sum_{i=2}^n  \max \cb{0,|x_{i-1}|-x_i}$$
	and
	$$ h(x) = 100 \sum_{i=2}^n  \of{|x_{i-1}|-x_i}.$$
	In Table \ref{tab:3} we present numerical results for the primal and dual algorithm developed in this article and compare them with the results of DCECAM and DCPA obtained in \cite{Ferrer2014}. We see that both of our algorithms perform better than DCECAM and DCPA, where it should be noted, that the latter algorithms are designed for a more general problem class (non-polyhedral DC problems). In particular, the dual method is preferable for this example. Note that for $n=10$ we need to solve a multiple linear program with $n+2=12$ objectives. 
	\begin{table}[htp] 
	\begin{center}
	\begin{tabular}{rrrrr}
	\toprule 
	 n        & DCECAM \cite{Ferrer2014} & DCPA \cite{Ferrer2014} & primal method & dual method \\ 
	\midrule
	 2        & 0.21            & 0.22          & 0.04 & 0.03 \\ 
	 3        & 3.57            & 4.63          & 0.09 & 0.04 \\ 
	 4        & 2.47            & 0.78          & 0.29 & 0.05 \\ 
	 5        & 345.12          & 502.29        & 2.05 & 0.09 \\ 
	 6        & -               & -             & 65.76 & 0.32 \\ 
	 7        & -               & -             & 5107.84 & 1.17 \\ 
	 8        & -               & -             & -       & 47.36 \\ 
	 9        & -               & -             & -       & 247.21 \\ 
	 10       & -               & -             & -       & 3028.10\\ 
	\bottomrule
	\end{tabular}
	\caption{Computational results for Example 10 in \cite{Ferrer2014}. The running times (in seconds) for DCECAM and DCPA are taken from \cite{Ferrer2014} and are obtained on a computer with Intel\textregistered\ Core\texttrademark\ i5-3470S CPU with 2.9 GHz.}
	\label{tab:3}
	\end{center}
	\end{table}	
\end{example}

\begin{example}\label{ex:03} Let $Q \in \R^{n \times n}$ be a positive semi-definite symmetric matrix.
	The problem to maximize the convex quadratic function $h:\R^n \to \R$, $h(x) = x^T Q x$ over a polyhedral convex set $S \subseteq \R^n$ can be reformulated as a DC-program by letting $g=\mathds{I}_S$, i.e. the indicator function of $S$, and minimizing $f=g-h$. For example, let $S:= \cb{x \in \R^n \st -e \leq x  \leq e}$ be an $n$-dimensional hypercube. As $h$ is convex, $Q$ has only non-negative eigenvalues. We assume that $Q$ has at most $m \leq n$ positive eigenvalues. Equivalently, $Q$ can be expressed as $Q=P^T P$ for some $P \in \R^{m \times n}$. For example, let the entries of $P$ be defined by
	$$ P_{ij} = 
	\lfloor m \cdot \sin((j-1)\cdot m+i)\rfloor,$$
where  $\lfloor x\rfloor := \max\cb{z \in \mathbb{Z}\mid z \leq x}$. 
Setting $y:=Px$  and $T = \cb{y \in \R^m \mid \exists x \in S: y = Px}$, we obtain the equivalent problem to minimize $\bar h:\R^m \to \R \cup \cb{+\infty}, \bar h(y) = y^T y$ over the polyhedron $T\subseteq \R^m$. 
For $n=10$ and $m=4$ we have 
$$ P=\begin{pmatrix}
     \phantom{-}3&    -4&     \phantom{-}1&     \phantom{-}1&    -4&     \phantom{-}3&    -1&    -3&     \phantom{-}3&    -3\\
     \phantom{-}3&    -2&    -3&     \phantom{-}3&    -4&    -1&     \phantom{-}3&    -4&     \phantom{-}2&     \phantom{-}1\\
   \phantom{-}0&     \phantom{-}2&    -4&     \phantom{-}2&     \phantom{-}0&    -4&     \phantom{-}3&    -2&    -2&     \phantom{-}3\\
    -4&     \phantom{-}3&    -3&    -2&     \phantom{-}3&    -4&     \phantom{-}1&    \phantom{-}2&    -4&     \phantom{-}2\\
\end{pmatrix}$$
and the optimal solution obtained by Algorithm \ref{primal_alg} applied to the reformulated problem is  $$x = ( 1,    -1,     1,     1,    -1,     1,    -1,    -1,     1,    -1)^T.$$
In Table \ref{tab:ex03}, further numerical results for this example can be found.
\end{example}
	\begin{table}[htp] 
	\begin{center}
	\begin{tabular}{r@{\hskip 1cm}rrrrrr}
	\toprule 
	& \multicolumn{5}{c}{$m$} \\ \cmidrule(r){2-6} 
	$n$ & 2 & 3 & 4 & 5 & 6\\ 
	\midrule 
	10    & 0.02 & 0.04 & 0.6 & 8.7 & 76.0\\ 
	50    & 0.02 & 1.57 & 22.5 &  -  & -  \\ 
	200   & 0.04 & 2.39 &  62440.0    & -   & -  \\ 
	1000  & 0.15 & 3.10 & -    & -   & -  \\ 
	5000  & 1.50 & 13.60 & -    & -   & -  \\ 	
	\bottomrule
	\end{tabular}
	\caption{Running times in seconds for Example \ref{ex:03}.}
	\label{tab:ex03}
	\end{center}
	\end{table}
		
\begin{example} \label{ex:04} Let $g:\R^n\to \R$, $g(x) = x^T Q x$ for some positive definite symmetric matrix $Q$. Then the conjugate is $g^*(y) = \frac{1}{4} y^T Q^{-1} y$ and the optimal solution of \eqref{p3} is $x=\frac{1}{2}Q^{-1} y$. For example, let $Q=P^T P$ where $P \in \R^{n \times n}$ defined by $P_{ij} = 1$ for $i \geq j$ and $P_{ij}=0$ otherwise, then $Q$ is positive definite. Further, we set
	$$ h(x) = \sum_{i=2}^n  \of{|x_{i-1}|-x_i}.$$
Some numerical results of minimizing $f = g - h$ using Algorithm \ref{dual_alg} are shown in Table \ref{tab:ex04}. 	
\end{example}

	\begin{table}[htp] 
	\begin{center}
	\begin{tabular}{r@{\hskip 1cm}rrrrrrrrr}
	\toprule 	
	$n$ & 2 & 3 & 4 & 5 & 6 & 7 & 8 & 9 & 10\\ 
	\midrule 
	optimal value   & -1.25 & -2.75 & -3.75 & -4.75 & -5.75& -6.75 & -7.75 & -8.75 & -9.75 \\
	running time (sec)  & 0.02 & 0.02 & 0.02 & 0.02 & 0.05 & 0.21 & 1.85 & 21.5 & 281 \\
	\bottomrule
	\end{tabular}
	\caption{Numerical results for Example \ref{ex:04}.}
	\label{tab:ex04}
	\end{center}
	\end{table}

\section{Conclusion and final remarks}

It is demonstrated that MOLP solvers provide a very easy way to solve DC programs where at least one of the two functions $g,h$ in the objective $f=g-h$ is polyhedral. In case of both $g$ and $h$ being polyhedral, the investigations result in two different algorithms, called primal and dual algorithm. The running time of the primal algorithm depends mainly on the effort to compute the vertices of $\epi g$ by the MOLP solver. Likewise the dual algorithm depends on $\epi h^*$. The proposed methods work well, if the dimension of $\epi g$ in Algorithm \ref{primal_alg} or $\epi h^*$ in Algorithm \ref{dual_alg} is not too high (up to $11$ in the examples above).
\medskip

{In this study we consider functions decomposed as $f = g-h$ where $g$ (resp. $h$) is polyhedral. The examination of such a decomposition is motivated by applications such as the problem to locate semi-obnoxious facilities, see Example \ref{ex:loc_an}. Nevertheless, one may wish to define the class of all functions that have such a structure and formulate an algorithm to obtain it. 
	It is known that twice continuously differentiable functions are DC \cite{Har59,HU85}. A DC function $f=g-h$ with  $g$ being polyhedral (resp. $h$ being polyhedral) is piecewise concave (resp. convex). Here ``piecewise'' means that the domain of $f$ can be decomposed into finitely many convex polyhedra $P_i$ such that they have disjoint interior and $f$ is concave (resp. convex) on each $P_i$. Moreover, $f$ is locally Lipschitz on the interior of its domain as so is every concave (or convex) function. An interesting question is whether or not the opposite implication holds: 
	Can each piecewise concave (resp. convex) function $f$ which is locally Lipschitz on the interior of its domain be decomposed as $f = g-h$ where $g$ (resp. $h$) is polyhedral? To answer this question and also to provide an appropriate algorithm is left as an open problem.
}

{\bigskip \noindent {\bf Acknowledgements:} We thank both anonymous reviewers for their useful comments which inspired the Remarks \ref{rev_rem1} and \ref{rev_rem2} and the discussion about the class of DC functions whose decomposition contains a polyhedral component in the last paragraph of the conclusions.}

\bibliographystyle{abbrv}
\bibliography{database,Convex_Analysis,DC,Location,Repulsion}

\end{document}